\newtheorem{thm}{Theorem}[section]
\newtheorem{cor}[thm]{Corollary}
\numberwithin{equation}{section}
\newcommand{\R}{\mathbb{R}}
\def\E{\mathbb{E}}
\begin{document}
\title{An Investigation of Newton-Sketch and Subsampled Newton Methods
}

\author{Albert S. Berahas\thanks{Department of Industrial and Systems Engineering, Lehigh University. \newline \url{albertberahas@gmail.com} This author was supported by the Defense Advanced Research Projects Agency (DARPA).}
       \and   
       Raghu Bollapragada\thanks{Department of Industrial Engineering and Management Sciences, Northwestern University. \newline\url{raghu.bollapragada@u.northwestern.edu} This author was supported by the Defense Advanced Research Projects Agency (DARPA).}
	   \and
       Jorge Nocedal\thanks{Department of Industrial Engineering and Management Sciences, Northwestern University. \newline\url{j-nocedal@northwestern.edu} This author was supported by the Office of Naval Research grant N00014-14-1-0313 P00003, and by National Science Foundation grant DMS-1620022}
            }

\maketitle

\begin{abstract}{

Sketching, a dimensionality reduction technique, has received much attention in the statistics community. In this paper, we study sketching in the context of Newton's method for solving finite-sum optimization problems in which the number of variables and data points are both large. We study two forms of sketching that perform dimensionality reduction in data space: Hessian subsampling and randomized Hadamard transformations. Each has its own advantages, and their relative tradeoffs have not been investigated in the optimization literature. Our study focuses on practical versions of the two methods in which the resulting linear systems of equations are solved approximately, at every iteration,  using an iterative solver. The advantages of using the conjugate gradient method vs. a stochastic gradient iteration are revealed through a set of numerical experiments, and a complexity analysis of the Hessian subsampling method is presented. 

\bigskip\noindent
\textbf{Keywords:} sketching, subsampling, Newton's method, machine learning, stochastic optimization}
\end{abstract}

\section{Introduction}
\label{sec:intro}
\setcounter{equation}{0}

In this paper, we consider second order methods for solving the finite sum problem,
\begin{align}  \label{problem}
\min_{w\in\mathbb{R}^d}F(w) =\frac{1}{n}\sum_{i=1}^{n}F_i(w),
\end{align}
in which each component function $F_i : \mathbb{R}^d \rightarrow \mathbb{R}$ is smooth and convex. Second order information is incorporated into the algorithms using sketching techniques, which construct an approximate Hessian by performing dimensionality reduction in  data space. Two forms of sketching are of  interest in the context of problem \eqref{problem}.  The simplest form,  referred to as  Hessian subsampling, selects individual Hessians at random and averages them. The second variant consists of sketching via randomized Hadamard transforms, which may yield a better approximation of the Hessian but can be computationally expensive. A study of the trade-offs between these two approaches is the subject of this paper. 

Our focus is on practical implementations designed for problems in which the number of variables $d$ and the number of data points $n$ are both large. A key ingredient is to avoid the inversion of large matrices by solving linear systems (inaccurately) at each iteration using an iterative linear solver. This leads us to the question of which is the most effective linear solver for the type of algorithms and problems studied in this paper.  In the light of the recent popularity of the stochastic gradient method, it is natural to investigate whether it could be preferable (as an iterative linear solver) to the conjugate gradient  method, which has been a workhorse in numerical linear algebra and optimization. On the one hand, a stochastic gradient iteration can exploit the stochastic nature of the Hessian approximations; on the other hand the conjugate gradient (CG) method enjoys attractive convergence properties. We present theoretical and computational results contrasting these two approaches and highlighting the superior properties of the conjugate gradient method. Given the appeal of the Hessian subsampling method that utilizes a CG iterative solver, we derive computational complexity bounds for it. 

A Newton method employing sketching techniques was proposed by  Pilanci and Wainwright \cite{pilanci2017newton} for problems in which the number of variables $d$ is not large but the number of data points $n$ can be extremely large. They assume that the objective function is such that a square root of the Hessian can be computed at reasonable cost (as is the case with generalized linear models). They analyze the relative advantages of various sketching techniques and provide computational complexity results under the assumption that linear systems are solved exactly. However, they do not present extensive numerical results, nor do they compare their approaches with Hessian subsampling. 

Second order optimization methods that employ Hessian subsampling techniques have been studied in 
 %
\cite{exact2018ima,drineas2011faster,erdogdu2015convergence,NIPS2016_6207,pilanci2017newton,roosta2018sub,wang2017sketching,wang2018sketched,woodruff2014sketching}.
Roosta-Khorasani and Mahoney \cite{roosta2018sub} derived convergence rates of subsampled Newton methods in probability for different sampling rates. Bollapragada et al.  \cite{exact2018ima} provided convergence and complexity results in expectation for strongly convex functions, and employed CG to solve the linear system inexactly. Recently, several works \cite{xu2017newton, xu2017second, yao2018inexact} have studied the theoretical and empirical performance of subsampled Newton methods for solving non-convex problems. Agarwal et al.  \cite{agarwal2016second} proposed to use a stochastic gradient-like method for solving the linear systems arising in subsampled Newton methods.

Given the  popularity of first order methods (see e.g.,  
\cite{sra2012optimization, bottou2017optimization,johnson2013accelerating,nesterov2013introductory,wright2015coordinate}
and the references therein), 
one may question whether algorithms that incorporate stochastic second order information have much to offer \emph{in practice} for solving large scale instances of problem \eqref{problem}.  In this paper, we argue that this is indeed the case in many important settings. Second order information can provide stability to the algorithm by facilitating the selection of the steplength parameter $\alpha_k$, and by  countering the effects of ill conditioning, without greatly increasing the cost of the iteration. 

The paper is organized as follows. In Section \ref{ns_ssn}, we describe sketching techniques for second order optimization methods. Practical implementations of the algorithms are presented in Section~\ref{sec:pract_methods}, and the results of our numerical investigation are given in Section~\ref{sec:num_study}. We present a convergence analysis for the Hessian subsampling method in Section \ref{sec:conv_analysis}, where we also compare its work complexity with that of Newton methods using other sketching techniques such as the Hadamard transforms.  Section~\ref{sec:main_findings} makes some final remarks about the contributions of the paper.

\section{Stochastic Second Order Methods}
\label{ns_ssn}

The methods studied in this paper are of the form,
\begin{equation}    \label{ssn}
A_k p_k = - \nabla F(w_k), \quad w_{k+1}= w_k + \alpha_k p_k,
\end{equation}
where $\alpha_k$ is the steplength parameter  and $A_k \succ 0$ is a stochastic approximation of the Hessian obtained via the sketching techniques described below. We show that a useful estimate of the Hessian can be obtained by reducing the dimensionality in  data space, making it possible to tackle problems with extremely large datasets.  We assume throughout the paper that the gradient $\nabla F(w_k)$ is exact, and therefore, the stochastic nature of the iteration is entirely due to the the choice of $A_k$. We could have employed a stochastic approximation to the gradient but opted not to do so because the use of an exact gradient allows us to study Hessian approximations in isolation and endows the algorithms with a $Q$-linear convergence rate, which makes them directly comparable with first order methods. 

 We now discuss how the Hessian approximations $A_k$ are constructed.

 \subsection*{Hessian-Sketch}

 As proposed by Pilanci and Wainwright \cite{pilanci2017newton}, the matrix $A_k$ can be defined as a randomized approximation to  the true Hessian  that  utilizes a decomposition of the matrix and approximates the Hessian via random projections in lower dimensions. Specifically, at every iteration, the algorithm defines a random \emph{sketch} matrix $S^k \in \mathbb{R}^{m \times n}$ with the property that $\mathbb{E}\left[{(S^k)^TS^k}/{m}\right]=I_n$, and  $m <n$. Next, the algorithm computes a square-root decomposition  of the  Hessian $\nabla^2 F(w_k)$, which we denote by $\nabla^2 F(w_k)^{\sfrac{1}{2}} \in \mathbb{R}^{n \times d}$, and defines the Hessian approximation as
   \begin{equation}	\label{sketch}
 A_k = \Big( (S^k\nabla^2 F(w_k)^{\sfrac{1}{2}})^TS^k\nabla^2 F(w_k)^{\sfrac{1}{2}}\Big) .
 \end{equation}
 The search direction of the algorithm is obtained by solving the linear system  in \eqref{ssn}. When $m << n$ forming $A_k$ is much less expensive than forming the Hessian $\nabla^2 F(w_k)$.

This approach is not always viable because a square root Hessian matrix may not be easily computable,  but there are important classes of problems when it is. Consider for example, generalized linear models where $F(w) = \sum_{i=1}^n g_i(w;x_i)$. The square-root matrix can be expressed as $ \nabla^2 F(w)^{\sfrac{1}{2}}= D^{\sfrac{1}{2}}X$, where $X \in \mathbb{R}^{n \times d}$ is the  data matrix, $x_i \in \mathbb{R}^{1\times d}$ denotes the $i$-th row of $X$, and $D$ is a diagonal matrix given by $D = {\rm diag}\{ g_i''(w;x_i)^{\sfrac{1}{2}}\}_{i=1}^n$. The multiplication of the sketch matrix $S^k$ with $ \nabla^2 F(w)^{\sfrac{1}{2}}$ leads to the projection of the latter matrix on to a smaller dimensional subspace. 

Randomized projections can be performed based on the fast Johnson-Lindenstrauss (JL) transform, e.g., Hadamard matrices, which allow for a fast matrix-matrix multiplication and have been shown to be efficient in least squares settings. We follow \cite{krahmer2011new, pilanci2017newton} and define the Hadamard matrices $H \in \mathbb{R}^{n\times n}$  as having entries $|H_{i,j}| \in \left[\frac{-1}{\sqrt{n}}, \frac{1}{\sqrt{n}}\right]$. The randomized Hadamard projections are performed using the sketch matrix $S^k\in \mathbb{R}^{m \times n}$ whose rows are chosen by randomly sampling $m$ rows of the matrix $\sqrt{n} HD$, where $D \in \mathbb{R}^{n \times n}$ is a diagonal matrix with random $\pm{1}$ entries.  
The properties of this approach have been analyzed in \cite{pilanci2017newton}, where it is shown that each iteration has complexity $\mathcal{O}(nd \log(m) +dm^2)$, which is   lower than the complexity of Newton's method ($\mathcal{O}(nd^2)$), when $n>d$ and $m = \mathcal{O}(d)$.

 \subsection*{Hessian Subsampling}
Hessian subsampling is a special case of sketching where the Hessian approximation is constructed by randomly sampling component Hessians $\nabla^2 F_i$ of \eqref{problem}, and averaging them. Specifically, at the $k$-th iteration, one defines
\begin{align}		\label{subh}
A_k = \nabla^2 F_{T_k}(w_k) = \frac{1}{\left| T_k \right|} \sum_{i \in T_k} \nabla^2 F_i(w_k) ,
\end{align}
where the index set $T_k \subset \{1,...,n \}$ is chosen either uniformly at random (with or without replacement) or in a non-uniform manner as described in \cite{xu2016sub}.  
The search direction is defined as the (inexact) solution of the system of equations in \eqref{ssn} with $A_k$ given by \eqref{subh}. By computing an approximate solution of the linear system that is only as accurate as needed to ensure progress toward the solution, one achieves computational savings, as discussed in the next section. Newton methods that use Hessian subsampling have recently received considerable attention, as mentioned above, and some of their theoretical properties are well understood \cite{exact2018ima,erdogdu2015convergence,roosta2018sub,xu2016sub}.  Although Hessian subsampling can be thought of as a special case of sketching,  we view it here as a distinct method because it can be motivated directly from the finite sum structure \eqref{problem} without referring to randomized projections.

\subsection*{Notation and Nomenclature} In the rest of the paper, we use the acronym SSN (Sub-Sampled-Newton) to denote algorithm \eqref{ssn} where $A_k$ is chosen by Hessian subsampling \eqref{subh}, and use the term \emph{Newton-Sketch} when other choices of the sketch matrix are employed, as in \eqref{sketch}.

\subsection*{Discussion}
We thus have two distinct classes of methods at our disposal. On the one hand,  Newton-Sketch enjoys superior statistical properties (as discussed below) but has a high computational cost. On the other hand, the SSN method  is simple to implement and is inexpensive, but could have inferior statistical properties.  For example, when individual component functions $F_i$ are highly dissimilar, taking a small sample of individual Hessians at each iteration of the SSN method may not yield a useful approximation to the true Hessian. For such problems, the Newton-Sketch method that forms the matrix based on a combination of all component functions may be more effective. 

In the rest of the paper, we compare the computational and theoretical properties of the two approaches when solving finite-sum problems involving many variables and large data sets.  In order to undertake this investigation we need to give careful consideration to the large-scale implementation of these methods.

\section{Practical Implementations of the Algorithms}
\label{sec:pract_methods}
 We can design efficient implementations of the Newton-Sketch and subsampled Hessian method for the case when the number of variables $d$ is very large by giving careful consideration to how the linear system in \eqref{ssn} is formed and solved. Pilanci and Wainwright \cite{pilanci2017newton}  proposed to implement the Newton-Sketch method using direct solvers, but this is only practical when $d$ is not  large. Since we do not wish to impose such a restriction on $d$, we utilize iterative linear solvers tailored to each specific method. We consider two iterative methods: the conjugate gradient method, and a stochastic gradient iteration \cite{agarwal2016second} that is inspired by (but it not equivalent to) the stochastic gradient method of Robbins and Monro \cite{RobMon51}.

\subsection{The Conjugate Gradient (CG) Method}
 For large scale problems, it can be very effective to employ a Hessian-free approach, and compute an inexact solution of linear systems with coefficient matrices \eqref{sketch} or  \eqref{subh} using an iterative method that only requires matrix-vector products instead of the matrix itself. The conjugate gradient method is the best known method in this category \cite{GoluvanL89}. One can compute a search direction $p_k$  by running $r \leq d$ iterations of CG on the system
\begin{align}	\label{sysa}
       A_k p_k = -\nabla F(w_k).
\end{align} 
 Under a favorable distribution of the eigenvalues of the matrix $A_k$, the CG method can generate a useful approximate solution in much fewer than $d$ steps; we exploit this result in the analysis presented in Section \ref{sec:conv_analysis}. We now discuss how to apply the CG method to the two classes of methods studied in this paper.

\subsubsection*{Newton-Sketch} 
The linear system \eqref{sketch}  arising in the Newton-Sketch method requires access to a square-root Hessian matrix.  This is a rectangular $n \times d$ matrix, denoted by $\nabla^2 F(w_k)^{\sfrac{1}{2}}$, such that  $[\nabla^2 F(w_k)^{\sfrac{1}{2}}]^T \nabla^2 F(w_k)^{\sfrac{1}{2}} =\nabla^2 F(w_k)$. Note that this is not the kind of decomposition (such as LU or QR) that facilitates the formation and solution of of linear systems involving the matrix \eqref{sketch}. Therefore, when $d$ is large it is imperative to use an iterative method such as CG. The entire matrix $A_k$ need not be formed, and  we only need to perform two matrix-vector products with respect to the sketched square-root Hessian. More specifically, for any vector $u \in \mathbb{R}^d$ one can compute 
\begin{align*}
v=\Big( (S\nabla^2 F(w)^{\sfrac{1}{2}})^TS\nabla^2 F(w)^{\sfrac{1}{2}}\Big) u
\end{align*}
 in two steps as follows
\begin{align}		\label{subNewton}
v_1=(S\nabla^2 F(w)^{\sfrac{1}{2}}) u, \qquad \text{and } \qquad v=(S\nabla^2 F(w)^{\sfrac{1}{2}})^Tv_1.
\end{align} 

Our implementation of the Newton-Sketch method includes two tuning parameters: the number of rows $m_{\text{NS}}$ in the sketch matrix $S$, and the maximum number of CG iterations, $\max_{\text{CG}}$, used to compute the step. We define the sketch matrix $S$ through the Hadamard basis that allows for a fast matrix-vector multiplication,  to obtain $S\nabla^2 F(w)^{\sfrac{1}{2}}$. This does not require forming the sketch matrix explicitly and can be executed at a cost of  $\mathcal{O}(m_{\text{NS}}d\log n)$ flops.  The maximum cost of every iteration is given as the evaluation of the gradient $\nabla F(w_k)$ plus $2m_{\text{NS}}\times max_{\text{CG}}$ component matrix-vector products, 
where the factor 2 arises due to 
\eqref{subNewton}.  We ignore the cost of forming the square-root matrix, which is not large in the case of generalized linear models; our test problems are of this form. We also ignore the cost of multiplying by the sketch matrix, which can be large. Therefore, the results we report in the next section are the \emph{most optimistic}  for Newton-Sketch.

\subsubsection*{Subsampled Newton}
The subsampled Hessian used in the SSN method is significantly simpler to define compared to sketched Hessian, and the CG solver is naturally suited to a Hessian-free approach. The product of $A_k$ times vectors can be computed by simply summing the individual  Hessian-vector products. That is, given a vector $u \in \mathbb{R}^d$,
	
\begin{equation}    \label{matv}
v= A_k u = \nabla^2F_{T_k}(w_k)u = \frac{1}{|T_k|}\sum_{i \in T_k}\nabla^2F_i(w_k)u .
\end{equation}	
We refer to this method as SSN-CG method. Unlike Newton-Sketch that requires the projection of the square-root matrix onto the lower dimensional manifold, the SSN-CG method can be implemented without the need of projections, and thus does not require additional computation beyond the matrix-vector products \eqref{matv}.

The tuning parameters involved in the implementation of this method are the maximum size of the subsample, $T$, and the maximum number of CG iterations used to compute the step, $\max_{\text{CG}}$. Thus, the maximum cost of a SSN-CG iteration is given by the cost of evaluating the gradient $\nabla F(w_k)$ plus $T\times \max_{\text{CG}}$ matrix-vector products \eqref{matv}.

\subsection{A Special Stochastic Gradient Iteration}
We can also compute an approximate solution of the linear system in \eqref{ssn} using a stochastic gradient iteration similar to that proposed in  \cite{agarwal2016second} and studied in \cite{exact2018ima}. This stochastic gradient iteration, that we refer to as SGI, is appropriate only in the context of the SSN method because for Newton-Sketch there is no efficient way to compute the stochastic gradient as the sketched Hessian cannot be decomposed into individual components.  The resulting SSN-SGI method operates in cycles of length $m_{\text{SGI}}$. At every iteration of the cycle, an index $i \in \{1,2,...,n \}$ is chosen at random, and a gradient step is computed for the following quadratic model at $w_k$:

\begin{align}	\label{eq:quad_component}
Q_i(p) = F(w_k) + \nabla F(w_k)^Tp + \frac{1}{2}p^T \nabla^2 F_i(w_k) p.
\end{align} 
This yields the iteration
\begin{align}		\label{eq:sgi_inner_alpha}
p_k^{t+1} = p^t_k - \bar \alpha\nabla Q_i(p^t_k) = (I - \bar \alpha\nabla^2 F_i(w_k))p^t_k  - \bar \alpha \nabla F(w_k), \quad \ 
\mbox{with} \ p^0_k = -\nabla F(w_k).
\end{align}
A method similar to SSN-SGI has been analyzed in \cite{agarwal2016second}, where it is referred to as LiSSA, and   \cite{exact2018ima} compares the complexity of LiSSA and SSN-CG. None of those papers report detailed numerical results, and the effectiveness of the SGI approach was not known to us at the outset of this investigation. 

The total cost of an (outer) iteration of the SSN-SGI method is given by the evaluation of the gradient $\nabla F(w_k)$  plus $m_{\text{SGI}}$  Hessian-vector products of the form $\nabla^2 F_i v$. This method has two tuning parameters: the inner iteration step length $\bar \alpha$ and the length $m_{\text{SGI}}$ of the inner SGI cycle.

\section{Numerical Study}
\label{sec:num_study}

In this section, we study the practical performance of the Newton-Sketch and Subsampled  Newton (SSN) methods described in the previous sections. As a benchmark, we  compare them against a first order method, and for this purpose we chose  SVRG \cite{johnson2013accelerating}, a method that has gained much popularity in recent years. The main goals of our numerical investigation are to determine whether the second order methods have advantages over first order methods, to identify the classes of problems for which this is the case, and to evaluate the relative strengths of the Newton-Sketch and SSN approaches. 
 
 We test two versions of the SSN method that differ in the choice of iterative linear solver, as mentioned above; we refer to them as SSN-CG and SSN-SGI. The steplength $\alpha_k$ in \eqref{ssn}  for the Newton-Sketch and the SSN methods was determined by an Armijo back-tracking line search, starting from a unit step length. For methods that use CG to solve the linear system, the CG iteration was terminated as soon as the following condition is satisfied,
\begin{align}	\label{residual}
\| \nabla ^2 F_{T_k}(w_k)p_k + \nabla F(w_k)\| < \zeta \| \nabla F(w_k) \|,
\end{align}
where $\zeta$ is a tuning parameter.  
For SSN-SGI, the SGI method was terminated after a fixed number,  $m_{\text{SGI}}$, of iterations.
The SVRG method operates in cycles. Each cycle begins with a full gradient step followed by $m_{\text{SVRG}}$ stochastic gradient-type iterations. SVRG has two tuning parameters: the (fixed) step length employed at every inner iteration and the length $m_{\text{SVRG}}$ of the cycle. The cost per cycle is the sum of a full gradient evaluation  plus $2m_{\text{SVRG}}$ evaluations of the component gradients $\nabla F_i$. Therefore, each cycle of SVRG is comparable in cost to the Newton-Sketch and SSN methods.

We test the methods on binary classification problems where the training  function is given by a logistic loss with $\ell_2$ regularization:
 \begin{align} 	\label{eq:bin_class}
  F(w) = \frac{1}{n}\sum_{i=1}^{n}\log(1+e^{-y^i(w^Tx^i)}) + \frac{\lambda}{2} \|w\|^2.
\end{align}
Here $(x^i, y^i)$,  $i=1, \ldots, n$, denote the training examples 
and $\lambda = \frac{1}{n}$.  For this objective function, the cost of computing the gradient is the same as the cost of computing a Hessian-vector product.
When reporting the numerical performance of the methods, we make use of this fact.  
The data sets employed in our experiments are listed in Appendix \ref{sec:ext_num}. They were selected to have different dimensions and condition numbers, and consist of both synthetic and public data sets.

\subsection*{Experimental Setup} To determine the best implementation of each method, we experimented with settings of parameters to achieve best practical performance. 
To do so, we first determined, for each method, the total amount of work to be performed between gradient evaluations. We call this the \emph{iteration budget} and denote it by $b$. For the second order methods,  $b$ controls the cost of forming and solving the linear system in \eqref{ssn}, while for the SVRG method it controls the amount of work of a complete cycle of length $m_{\text{SVRG}}$.  We tested 9\footnote{$b=\{n/100, n/50, n/10, n/5, n/2, n, 2n, 5n, 10n \}$} values of $b$ for each method and independently tested all possible combinations of the tuning parameters. 
For example, for the SVRG method, we set $b=n$, which implies that $m_{\text{SVRG}} = \frac{b}{2} = \frac{n}{2}$, and tuned the step length parameter $\alpha$. For Newton-Sketch, we chose pairs of parameters $(m_{\text{NS}},max_{\text{CG}})$ such that $\frac{m_{\text{NS}}\times max_{\text{CG}}}{2} = b$; for SSN-SGI, we set $m_{\text{SGI}} = b = n$, and tuned for the step length parameter $\alpha$; and for SSN-CG, we chose pairs of parameters $(T,max_{\text{CG}})$ such that $T\times max_{\text{CG}} = b$.

\subsection{Numerical Results} \label{results}

In Figure \ref{fig:best_performance}  we present a sample of results using four data sets (the rest of the results are given in Appendix~\ref{sec:ext_num}). We report training error ($F(w_k) - F^*$) vs. iterations,  as well as training error vs. effective gradient evaluations (defined as the sum of all function evaluations, gradient evaluations and Hessian-vector products). For SVRG one iteration denotes one  complete cycle. 

Our first observation, from the  second column of Figure \ref{fig:best_performance}, is that the SSN-SGI method is not competitive with the SSN-CG method. This was not easily predictable at the outset because the ability of the SGI method to use a new Hessian sample $\nabla^2 F_i$  at each iteration seems advantageous. But the results show that this is no match to the superior convergence properties of the CG method. Comparing with SVRG, we note that on the more challenging problems \texttt{gisette} and \texttt{australian} Newton-Sketch and SSN-CG are much faster; on problem \texttt{australian-scale}, which is well-conditioned, all methods perform on par; on the simple problem  \texttt{rcv1}  the benefits of using curvature information do not outweigh the increase in cost.   

The Newton-Sketch and SSN-CG methods perform on par on most of the problems in terms of effective gradient evaluations.  However, if we had reported CPU time, the SSN-CG method would show a dramatic advantage over Newton-Sketch. Interestingly, we observed that the optimal number of CG iterations  for the best performance of the two methods is very similar. On the other hand, the best sample sizes differ significantly: the optimal choice of the number of rows of the sketch matrix $m_{\text{NS}}$ in Newton-Sketch (using Hadamard matrices) is significantly lower than the number of subsamples $T$ used in the SSN-CG.  This is explained by the eigenvalue figures in Appendix~\ref{sec:eigs}, which show that a sketched Hessian with small number of rows is able to adequately capture the eigenvalues of the true Hessian on most test problems. 

More extensive results and commentary are provided in the Appendix.

\begin{figure}[H]  \label{represent}
	\begin{centering}
		\includegraphics[width=0.74\textwidth]{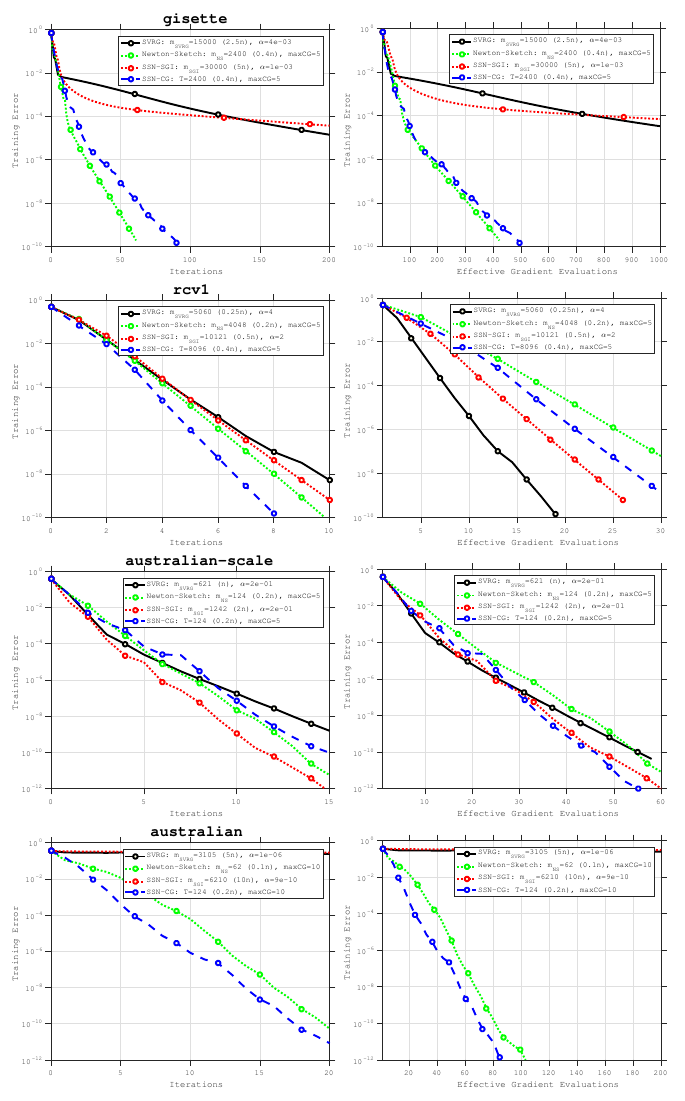}
	\par\end{centering}
	
	\caption{Comparison of SSN-SGI, SSN-CG, Newton-Sketch and SVRG on four machine learning data sets: \texttt{gisette}; \texttt{rcv1}; \texttt{australian\_scale}; \texttt{australian}. 
		Each row corresponds to a different problem. The first column reports training error ($F(w_k) - F^*$) vs iteration, and the second column reports training error vs effective gradient evaluations.  The tuning parameters of each method were chosen to achieve the best training error in terms of effective gradient evaluations.
	}
	\label{fig:best_performance}
\end{figure}

\section{Analysis}
\label{sec:conv_analysis}
We have seen in the previous section that the SSN-CG method is very efficient in practice. In this section, we discuss some of its theoretical properties.

A complexity analysis of the SSN-CG method based on the worst-case performance of  CG is given in \cite{exact2018ima}. That analysis, however, underestimates the power of the SSN-CG method on problems with favorable eigenvalue distributions and yields complexity estimates that are not indicative of its practical performance.  We now  present a more realistic analysis of  SSN-CG that exploits the optimal interpolatory properties of CG on convex quadratic functions. 

When applied to a symmetric positive definite linear system  $Ap=b$, the progress made by the CG method is not uniform but depends on the gap between eigenvalues. Specifically, if we denote the eigenvalues of $A$ by $0 < \lambda_1 \leq \lambda_2 \cdots \leq \lambda_d $, then the iterates $\{p^r\}$, $r=1,2, \ldots, d$, generated by the CG method satisfy \cite{Luenberger1984}   
\begin{align}  \label{cggap}
||p^{r} - p^*||_A^2 \leq \left(\frac{\lambda_{d-r+1} - \lambda_1}{\lambda_{d-r+1} + \lambda_1}\right)^2 \| p^0 - p^*||_A^2, \quad \mbox{where} \ \  A p^*=b.
\end{align}
Here $\| p\|^2_A = p^TAp.$
We make use of this property in the following local linear convergence result for the SSN-CG method, which is established under standard assumptions. 
{For simplicity}, we assume that the size of the sample $|T_k|=T $ used to define the Hessian approximation is constant throughout the run of the SSN-CG method (but the sample $T_k$ itself varies at every iteration).

\subsection*{Assumptions}

\begin{enumerate}
	\item [A.1] {\bf(Bounded Eigenvalues of Hessians)} Each function $F_i$ is twice continuously differentiable and each component Hessian is positive definite with eigenvalues lying in a positive interval. That is there exists positive constants $\mu$, $L$ such that
	\begin{equation} \label{eigen}
	\mu I   \preceq \nabla^2 F_{i}(w) \preceq L I, \quad \forall w \in \R^d.
	\end{equation}
	We define $\kappa = {L}/{\mu}$, which is an upper bound on the condition number of the true
	Hessian.
	\item [A.2] {\bf(Lipschitz Continuity of Hessian)} The Hessian of the objective function $F$ is Lipschitz continuous, 
	i.e., there is a constant $M >0$ such that 
	\begin{equation} \label{liphes}
	\|\nabla^2 F(w) - \nabla^2 F(z)\| \leq M\|w - z\|, \qquad \forall w,z  \in \R^d .
	\end{equation}
	\item [A.3] {\bf(Bounded Variance of Hessian components)} 
	There is a constant $\sigma$ such that, for all component Hessians, we have 
	\begin{equation}   \label{bvhes}
	\|\E[(\nabla^2 F_i(w) - \nabla^2 F(w))^2]\| \leq \sigma^2, \qquad \forall w \in \R^d.
	\end{equation}    
	\item [A.4] {\bf(Bounded Moments of Iterates)} There is a constant $\gamma > 0$ such that for any iterate $w_k$ generated by the SSN-CG method satisfies
	\begin{align}
	\E[\|w_k - w^* \|^2] \leq \gamma (\E[\| w_k - w^*\|])^2.
	\end{align}
\end{enumerate}

In practice, most finite-sum problems are regularized and therefore Assumption~A.1 is satisfied. Assumption~A.2 is a standard (and convenient) in the analysis of Newton-type methods. Concerning  Assumption~A.3, variances are always bounded for the finite-sum problem. Assumption~A.4 is imposed on non-negative numbers and is less restrictive than assuming that the iterates are bounded.

 Assumption A1 is relaxed in \cite{roosta2018sub} so that only the full Hessian $\nabla^2F$ is assumed positive definite, and not the Hessian of each individual  component,  $\nabla^2 F_i$.  However, \cite{roosta2018sub} only provides per-iteration improvement in probability, and hence, convergence of the overall method is not established. Rather than dealing with these intricacies, we rely on Assumption~A1 to focus instead on the interaction between the CG linear solver and the efficiency of method.

We now state the main theorem of the paper. We  follow the proof technique in \cite[Theorem 3.2]{exact2018ima} but rather than assuming the worst-case behavior of the CG method, we give a more realistic bound that exploits the properties of the CG iteration. Let, $0 < \lambda^{T_k}_1 \leq \lambda^{T_k}_2 \leq \cdots \leq \lambda^{T_k}_d$ denote the eigenvalues of subsampled Hessian $\nabla^2 F_{T_k}(w_k)$. In what follows, $\mathbb{E}$ denotes total expectation and $\mathbb{E}_k$ conditional expectation at the $k$th iteration.

\begin{thm} \label{thmain}
	Suppose that Assumptions A.1--A.4 hold.
	Let $\{w_k\}$ be the iterates generated by  the Subsampled Newton-CG method (SSN-CG) with $\alpha_k=1$, and
	\begin{align} \label{sampleb}
	|T_k|=T \geq \tfrac{64 \sigma^2 }{ \mu^2},
	\end{align}
	and suppose that the number of CG steps performed at iteration $k$ is the smallest integer $r_k$ such that 
	\begin{align}	\label{eig_cond}
	\left(\frac{\lambda_{d-r_k+1}^{T_k} - \lambda_1^{T_k}}{\lambda_{d-r_k+1}^{T_k} + \lambda_1^{T_k}}\right) \leq \frac{1}{8\kappa^{3/2}}.
	\end{align}
	Then, if $||w_0 - w^*|| \leq \min \left\{\frac{\mu}{2M}, \frac{\mu}{2\gamma M}\right\}$, we have
	\begin{equation}   \label{halfing}
	\E \left[||w_{k+1} - w^*||\right] \leq \tfrac{1}{2}\E \left[||w_k - w^*|| \right], \quad k=0,1,\ldots \,  .
	\end{equation}  
\end{thm}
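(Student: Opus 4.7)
The plan is to decompose the SSN-CG error in a standard way, control each piece separately, and then combine the bounds with the sample-size and CG-termination conditions. Let $p_k^\ast = -\nabla^2 F_{T_k}(w_k)^{-1}\nabla F(w_k)$ denote the exact solution of \eqref{substep}, so that
\begin{equation*}
w_{k+1} - w^\ast = \bigl(w_k + p_k^\ast - w^\ast\bigr) + \bigl(p_k - p_k^\ast\bigr).
\end{equation*}
The first summand is the error of an exact Newton step computed with the sampled Hessian; the second is the CG truncation error.

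To bound the first summand I would use the standard inexact-Newton identity
\begin{equation*}
w_k + p_k^\ast - w^\ast = \nabla^2 F_{T_k}(w_k)^{-1}\!\int_0^1 \!\bigl[\nabla^2 F_{T_k}(w_k) - \nabla^2 F(w^\ast + t(w_k-w^\ast))\bigr](w_k-w^\ast)\,dt,
\end{equation*}
obtained by writing $\nabla F(w_k) = \int_0^1 \nabla^2 F(\cdot)(w_k-w^\ast)\,dt$ since $\nabla F(w^\ast)=0$. Adding and subtracting $\nabla^2 F(w_k)$ splits the integrand into a sampling-error piece controlled by A.3 and a Taylor-remainder piece controlled by A.2, giving
\begin{equation*}
\|w_k+p_k^\ast-w^\ast\| \leq \tfrac{1}{\mu}\bigl\|\nabla^2 F_{T_k}(w_k)-\nabla^2 F(w_k)\bigr\|\,\|w_k-w^\ast\| + \tfrac{M}{2\mu}\|w_k-w^\ast\|^2.
\end{equation*}
Taking expectations, using the matrix concentration bound $\mathbb{E}\|\nabla^2 F_{T_k}(w_k)-\nabla^2 F(w_k)\| \leq \sigma/\sqrt{T}$ implied by A.3, together with A.4 to pass from $\mathbb{E}\|w_k-w^\ast\|^2$ to $\gamma(\mathbb{E}\|w_k-w^\ast\|)^2$, and invoking the sample-size condition \eqref{sampleb} (which forces $\sigma/(\mu\sqrt{T}) \leq 1/8$), yields
\begin{equation*}
\mathbb{E}\|w_k+p_k^\ast-w^\ast\| \leq \tfrac{1}{8}\mathbb{E}\|w_k-w^\ast\| + \tfrac{M\gamma}{2\mu}\bigl(\mathbb{E}\|w_k-w^\ast\|\bigr)^2.
\end{equation*}

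For the CG remainder, I would start CG from $p_k^0 = 0$ and apply inequality \eqref{cggap} with $A=\nabla^2 F_{T_k}(w_k)$. The stopping criterion \eqref{eig_cond} guarantees $\|p_k - p_k^\ast\|_{A_k} \leq (8\kappa^{3/2})^{-1}\|p_k^\ast\|_{A_k}$. Converting between the $A_k$-norm and the Euclidean norm using $\mu I \preceq A_k \preceq L I$ from A.1 costs a factor of $\sqrt{\kappa}$, while the bound $\|p_k^\ast\| \leq \|A_k^{-1}\|\|\nabla F(w_k)\| \leq \kappa\|w_k-w^\ast\|$ accounts for another factor of $\kappa$. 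These cancel the $\kappa^{3/2}$ in \eqref{eig_cond} exactly, producing
\begin{equation*}
\|p_k - p_k^\ast\| \leq \tfrac{1}{8}\|w_k-w^\ast\|.
\end{equation*}

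Combining the two bounds gives
\begin{equation*}
\mathbb{E}\|w_{k+1}-w^\ast\| \leq \tfrac{1}{4}\mathbb{E}\|w_k-w^\ast\| + \tfrac{M\gamma}{2\mu}\bigl(\mathbb{E}\|w_k-w^\ast\|\bigr)^2,
\end{equation*}
and the hypothesis $\|w_0-w^\ast\| \leq \min\{\mu/(2M),\mu/(2\gamma M)\}$ makes the quadratic term at most $\tfrac{1}{4}\mathbb{E}\|w_k-w^\ast\|$, delivering the contraction \eqref{halfing}. The one subtlety I expect to be the main obstacle is the induction: the quadratic-term control requires the neighborhood condition to persist for every $k$, so I would argue by induction that the contraction implies $\mathbb{E}\|w_k-w^\ast\|$ (and thus, via A.4 and Jensen, any quantity I actually need) stays inside the initial ball. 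A secondary care point is justifying $\mathbb{E}\|\nabla^2 F_{T_k}(w_k)-\nabla^2 F(w_k)\| \leq \sigma/\sqrt{T}$ cleanly from A.3 (either by Jensen on the Frobenius norm or by a direct second-moment calculation over the uniform sample), since A.3 is phrased in terms of the squared spectral norm rather than the norm itself.
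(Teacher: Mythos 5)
Your proposal is correct and follows essentially the same route as the paper: the identical decomposition into the exact subsampled-Newton error (Term 1) and the CG truncation error (Term 2), the same use of the eigenvalue-gap bound \eqref{cggap} with $p_k^0=0$ and the factor-of-$\sqrt{\kappa}$ norm conversion cancelling against \eqref{eig_cond}, and the same induction in which A.4 converts $\E[\|w_k-w^*\|^2]$ into $\gamma(\E[\|w_k-w^*\|])^2$ and the initial-ball condition keeps the quadratic term below $\tfrac14\E[\|w_k-w^*\|]$. The only differences are cosmetic: you derive Term 1 via the integral identity where the paper cites \cite[Lemma 2.3]{exact2018ima}, and on your flagged subtlety the clean fix is exactly what that lemma (and the paper's \eqref{twott1}) does --- apply A.3 to the matrix--vector product, $\E_k\|(\nabla^2F_{T_k}-\nabla^2F)(w_k-w^*)\|\le\sqrt{(w_k-w^*)^T\E_k[(\nabla^2F_{T_k}-\nabla^2F)^2](w_k-w^*)}\le\tfrac{\sigma}{\sqrt T}\|w_k-w^*\|$, rather than to the spectral norm of the matrix itself.
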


\begin{proof}
	We write
	\begin{align}   
	\E_k[\|w_{k+1} - w^*\|] &= \E_k[\|w_k -w^* + p_k^r\|]   \nonumber \\
	& \leq \underbrace{\E_k[\|w_k - w^* - \nabla^2F_{T_k}^{-1}(w_k)\nabla F(w_k)\|]}_\text{Term 1} +
	\underbrace{\E_k[\|p_k^r + \nabla^2F_{T_k}^{-1}(w_k)\nabla F(w_k)\|]}_\text{Term 2} . \label{twott}
	\end{align} 
	Term 1 was analyzed in \cite{exact2018ima}, which establishes the bound
	\begin{align}	\label{twott1}
	\E_k[\|w_k - w^* - \nabla^2F_{T_k}^{-1}(w_k)\nabla F(w_k)\|] \leq & \frac{M}{2\mu}\|w_k - w^*\|^2  \nonumber\\
	& + \frac{1}{\mu}\E_k\left[\left\|\left(\nabla^2 F_{T_k}(w_k) - \nabla^2 F(w_k)\right)(w_k - w^*)\right\|\right].
	\end{align}
	Using the result in Lemma 2.3 from \cite{exact2018ima} and \eqref{sampleb}, we have 
	\begin{align}  
	\E_k[\|w_k - w^* - \nabla^2F_{T_k}^{-1}(w_k)\nabla F(w_k)\|] &\leq \frac{M}{2\mu} \|w_k - w^*\|^2 + \frac{\sigma}{\mu \sqrt{T}} \|w_k - w^*\|  \nonumber\\
	& \leq \frac{M}{2\mu} \|w_k - w^*\|^2 + \frac{1}{8} \|w_k - w^*\|. \label{term1}
	\end{align}
	
	Now, we analyze Term 2, which is the residual after $r_k$ iterations of the CG method. Assuming for simplicity that the initial CG iterate is $p^0_k=0$, we obtain from \eqref{cggap} 
	\begin{align*}
	\|p_k^r + \nabla^2F_{T_k}^{-1}\nabla F(w_k)\|_{A_k} &\leq \left(\frac{\lambda_{d-r_k+1}^{T_k} - \lambda_1^{T_k}}{\lambda_{d-r_k+1}^{T_k} + \lambda_1^{T_k}(w_k)}\right) \|\nabla^2F_{T_k}^{-1}\nabla F(w_k)\|_{A_k} ,
	\end{align*}
	where $A_k= \nabla^2F_{T_k}(w_k)$.
	To express this in terms of unweighted norms,  note that if $\| a \|^2_{A}  \leq \| b \|^2_{A} $, then
	\[  
	\lambda_1 \|a\|^2 \leq   a^T {A} a \leq   b^T {A} b \leq \lambda_d \|b\|^2 \ \Longrightarrow \ \|a\| \leq \sqrt{\kappa(A)} \|b\| \leq \sqrt{\kappa} \|b\|.
	\]
	Therefore, from Assumption A1 and due to the condition \eqref{eig_cond} on the number of CG iterations, we have
	\begin{align}
	\| p_k^r + \nabla^2F_{T_k}^{-1}(w_k)\nabla F(w_k) \| &\leq  \sqrt{\kappa}\left(\frac{\lambda_{d-r_k+1}^{T_k}(w_k) - \lambda_1^{T_k}(w_k)}{\lambda_{d-r_k+1}^{T_k}(w_k) + \lambda_1^{T_k}(w_k)}\right) \| \nabla^2F_{T_k}^{-1}(w_k)\nabla F(w_k) \| \nonumber \\
	&\leq \sqrt{\kappa}\frac{1}{8\kappa^{3/2}} \| \nabla F(w_k)\| \, \|\nabla^2F_{T_k}
	^{-1}(w_k)\| \nonumber \\
	&\leq \frac{L}{\mu}\frac{1}{8\kappa} \|w_k - w^*\| 
	\nonumber \\
	&= \frac{1}{8} \|w_k - w^*\| ,
	\label{eder}
	\end{align}  
	where the last equality follows from the definition $\kappa= {L}/{\mu}$.
	
	Combining \eqref{term1} and \eqref{eder}, we get 
	\begin{align}   
	\E_k[\|w_{k+1} - w^*\|]  	
	& \leq \frac{M}{2\mu} \|w_k - w^*\|^2 + \frac{1}{4} \|w_k - w^*\| .
	\end{align} 
	
	We now use induction to prove \eqref{halfing}. For the base case we have, by the assumption on $\| w_0\|$ ,
	\begin{align*}
	\E [\| w_1 - w^*\|] &\leq \frac{M}{2\mu} \|w_0 - w^*\|^2 + \frac{1}{4} \|w_0 - w^*\| \\
	& \leq \tfrac{1}{4} \|w_0 - w^*\| + \tfrac{1}{4}\|w_0 - w^*\| \\
	&= \tfrac{1}{2} \|w_0 - w^*\| .
	\end{align*}
	Now suppose that \eqref{halfing} is true for $k$-th iteration. Then, by Assumption~A.4
	\begin{align*}
	\E [\E_k[\|w_{k+1} - w^*\|]] &\leq \frac{M}{2\mu} \E[\|w_k - w^*\|^2] + \frac{1}{4} \E[\|w_k - w^*\|]\\
	&\leq \gamma \frac{M}{2\mu}\E[\|w_k - w^*\|] \, \E[\|w_k - w^*\|] +  \frac{1}{4} \E[\|w_k - w^*\|]\\
	& \leq \tfrac{1}{4}\E [\|w_k - w^*\|] + \tfrac{1}{4}\E[\|w_k - w^*\|] \\
	& \leq \tfrac{1}{2}\E[\|w_k - w^*\|] .
	\end{align*}
\end{proof}

Thus, if the initial iterate is near the solution $w^*$, then the SSN-CG method converges to $w^*$ at a linear rate, with a convergence constant of $1/2$. As we discuss below, when the spectrum of the Hessian approximation is favorable, condition \eqref{eig_cond} can be satisfied after a small number ($r_k$) of CG iterations. 


We now establish a  complexity result for the SSN-CG method that bounds the number of component gradient evaluations ($\nabla F_i$) and component Hessian-vector products ($\nabla^2 F_iv$) needed to obtain an $\epsilon$-accurate solution. In establishing this result we assume that the cost of the product $\nabla^2 F_iv$  is the same as the cost of evaluating a component gradient; this is the case in many practical applications such as the problem tested in the previous section. 


\begin{cor} \label{cgcom} Suppose that Assumptions A.1--A.4 hold, and let $T$ and $r_k$ be defined as in Theorem \ref{thmain}. Then, the total work required to get an $\epsilon$-accurate solution is
	\begin{align}\label{cgwork}
	\mathcal{O}\left((n + \bar{r}\sfrac{\sigma^2}{\mu^2})d\log\left(\sfrac{1}{\epsilon}\right)\right), \quad
	\mbox{where} \ \  \bar{r} = \max_k r_k.
	\end{align}
\end{cor}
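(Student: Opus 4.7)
The plan is to combine the linear convergence rate from Theorem \ref{thmain} with a per-iteration cost accounting, where the per-iteration cost is split between the full gradient evaluation and the Hessian-vector products performed during CG.

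First, I would convert the error reduction guarantee \eqref{halfing} into an iteration complexity. Unrolling the recursion gives $\E[\|w_K - w^*\|] \leq (1/2)^K \|w_0 - w^*\|$, so to achieve $\E[\|w_K - w^*\|] \leq \epsilon$ it suffices to take $K = \mathcal{O}(\log(1/\epsilon))$ outer iterations of SSN-CG. This step is essentially immediate from the contraction factor $1/2$ established in Theorem \ref{thmain}.

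Next, I would account for the work done in each outer iteration. Each outer iteration requires (i) one full gradient $\nabla F(w_k)$, which costs $n$ component gradient evaluations at $\mathcal{O}(d)$ arithmetic work each, and (ii) the solution of the linear system \eqref{substep} by $r_k$ CG iterations. Each CG iteration performs one Hessian-vector product $\nabla^2 F_{T_k}(w_k) v$, which by \eqref{subh} decomposes into $|T_k| = T$ component Hessian-vector products; under the paper's standing assumption that a component Hessian-vector product costs the same as a component gradient (namely $\mathcal{O}(d)$), the CG work at iteration $k$ is $\mathcal{O}(r_k T d)$. Substituting the sample-size bound \eqref{sampleb}, namely $T = \mathcal{O}(\sigma^2/\mu^2)$, the total cost of one outer iteration is $\mathcal{O}((n + r_k \sigma^2/\mu^2) d)$.

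Finally, I would bound $r_k \leq \bar{r} = \max_k r_k$ uniformly in $k$ and sum the per-iteration cost over the $\mathcal{O}(\log(1/\epsilon))$ outer iterations to obtain
\begin{equation*}
\mathcal{O}\!\left( \bigl(n + \bar{r}\,\sfrac{\sigma^2}{\mu^2}\bigr)\, d \log\!\bigl(\sfrac{1}{\epsilon}\bigr) \right),
\end{equation*}
which is exactly \eqref{cgwork}. There is no genuine obstacle here beyond bookkeeping; the only mildly subtle point is that $\bar{r}$ is a data-dependent random quantity governed by the spectral condition \eqref{eig_cond}, so the bound is most informative when the spectra of the subsampled Hessians admit a small $\bar{r}$. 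The complexity statement is deliberately left in terms of $\bar{r}$ precisely to make this dependence explicit and to contrast with the worst-case CG bound $\bar{r} = \mathcal{O}(\sqrt{\kappa})$ used in \cite{exact2018ima}.
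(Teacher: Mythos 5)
Your proposal is correct and follows essentially the same argument the paper intends: the corollary is stated without an explicit proof, and the implicit derivation is exactly your bookkeeping---$\mathcal{O}(\log(\sfrac{1}{\epsilon}))$ outer iterations from the contraction factor $\tfrac12$ in Theorem \ref{thmain}, plus a per-iteration cost of $n$ component gradients and $r_k T$ component Hessian-vector products with $T = \mathcal{O}(\sfrac{\sigma^2}{\mu^2})$ from \eqref{sampleb}, each at $\mathcal{O}(d)$ work. Your closing remark that $\bar{r}$ is the spectrum-dependent quantity distinguishing this bound from the worst-case CG analysis also matches the paper's own discussion following the corollary.
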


This result involves the quantity $\bar{r}$, which  is the maximum number of CG iterations performed at any iteration of the SSN-CG method.
This quantity  depends  on the eigenvalue distribution of the subsampled Hessians. For various  eigenvalue distributions, such as clustered eigenvalues, $\bar r$ can be much smaller than $d$.
Such favorable eigenvalue distributions  can be observed in many applications, including machine learning problems. 
Figure~\ref{fig:eigs} depicts the  spectrum of $\nabla^2 F_{T}(w^*)$ for 4 data sets used in the logistic regression experiments discussed in the previous section. 

\begin{figure}[]
	\begin{centering}
		\includegraphics[width=1\textwidth]{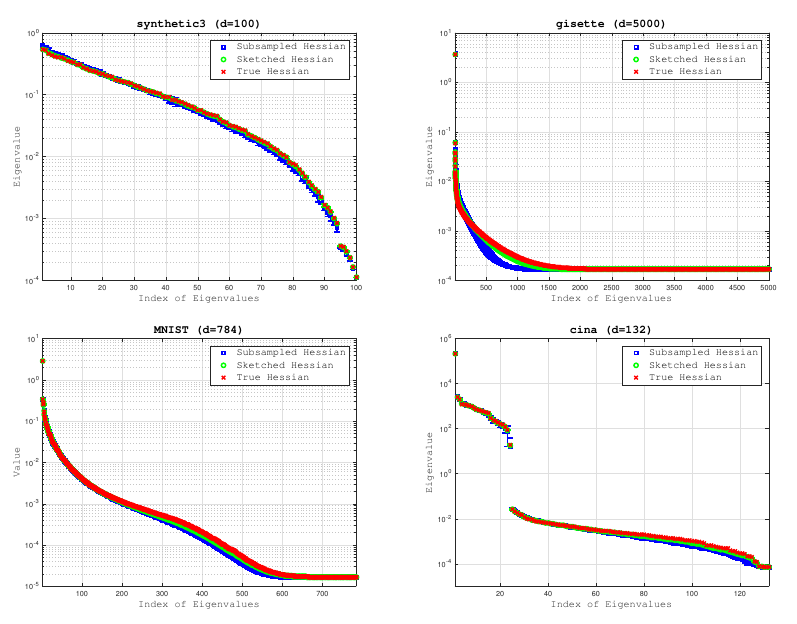}
	\par\end{centering}
	\caption{Distribution of eigenvalues of $\nabla^2 F_{T}(w^*)$, where $F$ is logistic loss with $\ell_2$ regularization \eqref{eq:bin_class},   for 4 data sets: \texttt{synthetic3}, \texttt{gisette}, \texttt{MNIST}, \texttt{cina}. The subsample size $T$ is chosen as $50\%$, $40\%$, $5\%$ and $5\%$ of the  size of the whole data set, respectively.}
	\label{fig:eigs}
\end{figure}
For {\tt gisette, MNIST} and {\tt cina} one can observe that a small number of CG iterations can give rise to a rapid improvement in accuracy in the solution of the linear system. For example, for {\tt cina}, the CG method improves the accuracy in the solution by 2 orders of magnitude during CG iterations 20-24.  For {\tt gisette}, CG exhibits a fast initial increase in accuracy, after which it is not cost effective to continue {performing CG steps} due to the spectral distribution of this problem.
The least favorable case is given by the {\tt synthetic3} data set, which was constructed to have a high condition number and a wide spectrum of eigenvalues. The behavior of CG on  this problem is similar to that predicted by its worst-case analysis. Additional plots are given Appendix \ref{sec:eigs}.

Let us now consider the Newton-Sketch algorithm, evaluate the total work complexity to get an $\epsilon$-accurate solution and compare with the work complexity of the SSN-CG method. Pilanci and Wainwright  \cite[Theorem 1]{pilanci2017newton}  provide a linear-quadratic convergence rate (stated in probability) for the exact Newton-Sketch method,  similar in form to \eqref{twott}-\eqref{twott1}, with Term 2 in \eqref{twott} equal to zero since linear systems are solved exactly. In order to achieve local linear convergence, the user-supplied parameter $\epsilon$ should be $\mathcal{O}({1}/{\kappa})$.  As a result, the sketch dimension $m$ is given by
\begin{align*}
m=\mathcal{O}\left(\sfrac{W^2}{\epsilon^2}\right)= \mathcal{O}\left(\kappa^2 \min\{n,d\}\right),
\end{align*}
where the second equality follows from the fact that the square of the Gaussian width $W$ is at most $\min\{n,d\}$ \cite{pilanci2017newton}. The per-iteration cost of Newton-Sketch with a randomized Hadamard transform is
\begin{align*}
\mathcal{O}\left(nd\log(m) + dm^2\right) = \mathcal{O}\left(nd\log(\kappa d) + d^3\kappa^4\right).
\end{align*}
Hence, the total work complexity required to get an $\epsilon$-accurate solution is  
\begin{align}\label{sketchwork}
\mathcal{O}\left((n + \kappa^4d^2)d\log\left(\sfrac{1}{\epsilon}\right)\right).
\end{align}
Note that in order to compare this with \eqref{cgwork}, we need to bound the parameters $\sigma$ and $\bar{r}$. Using the definitions given in the Assumptions, we have that $\sigma^2$ is bounded by a multiple of $L^2$, and we know that CG converges in at most $d$ iterations, thus $\bar{r} \leq d$. In the most pessimistic case CG requires $d$ iterations, and then \eqref{sketchwork} and \eqref{cgwork} are similar, with Newton-Sketch having a slightly higher cost compared to SSN-CG.

In the analysis stated above, we made certain implicit assumptions the algorithms and the problems. In the SSN method, it was assumed that the number of subsamples is less than the number of examples $n$; and in Newton-Sketch, the sketch dimension is assumed to be less than $n$. This implies that for these methods we made the implicit assumption that the number of data points is large enough so that $n> \kappa^2$.

\section{Final Remarks}
\label{sec:main_findings}

The theoretical properties of Newton-Sketch and subsampled Hessian Newton methods have recently received much attention, but their relative performance on large-scale applications has not been studied in the literature. In this paper, we focus on the finite-sum problem and observed that, although a sketched matrix based on Hadamard matrices has more information than a subsampled Hessian, and approximates the spectrum of the true Hessian more accurately, the two methods perform on par in our tests, as measured by effective gradient evaluations. However, in terms of CPU time, the subsampled Hessian Newton method is much faster than the Newton-Sketch method, which has a high per-iteration cost associated with the construction of the linear system \eqref{sketch}.  A Subsampled Newton method that employs the CG algorithm to solve the linear system (the SSN-CG method) is particularly appealing as one can  coordinate the amount of second order information employed at every iteration with the accuracy of the linear solver, and find an efficient implementation for a given problem. The power of the CG algorithm as an inner iterative solver is evident in our tests and is quantified in the complexity analysis presented in this paper. In particular, we find that CG is a more efficient solver than a first order stochastic gradient iteration.
	 
Our numerical tests show that the two second order methods studied in this paper are generally more efficient than SVRG (a popular first order method), on some logistic regression problems	 

\bibliographystyle{plain}

\bibliography{references}

\begin{thebibliography}{10}

\bibitem{agarwal2016second}
Naman Agarwal, Brian Bullins, and Elad Hazan.
\newblock Second-order stochastic optimization for machine learning in linear
  time.
\newblock {\em Journal of Machine Learning Research}, 18(116):1--40, 2017.

\bibitem{exact2018ima}
Raghu Bollapragada, Richard~H Byrd, and Jorge Nocedal.
\newblock Exact and inexact subsampled newton methods for optimization.
\newblock {\em IMA Journal of Numerical Analysis}, 39(2):545--578, 2018.

\bibitem{bottou2017optimization}
L{\'e}on Bottou, Frank~E Curtis, and Jorge Nocedal.
\newblock Optimization methods for large-scale machine learning.
\newblock {\em stat}, 1050:2, 2017.

\bibitem{CC01a}
Chih-Chung Chang and Chih-Jen Lin.
\newblock {LIBSVM}: A library for support vector machines.
\newblock {\em ACM Transactions on Intelligent Systems and Technology},
  2:27:1--27:27, 2011.
\newblock Software available at {http://www.csie.ntu.edu.tw/~cjlin/libsvm}.

\bibitem{drineas2011faster}
Petros Drineas, Michael~W Mahoney, S~Muthukrishnan, and Tam{\'a}s Sarl{\'o}s.
\newblock Faster least squares approximation.
\newblock {\em Numerische Mathematik}, 117(2):219--249, 2011.

\bibitem{erdogdu2015convergence}
Murat~A Erdogdu and Andrea Montanari.
\newblock Convergence rates of sub-sampled newton methods.
\newblock In {\em Advances in Neural Information Processing Systems 28}, pages
  3034--3042, 2015.

\bibitem{GoluvanL89}
Gene~H. Golub and Charles~F. Van~Loan.
\newblock {\em Matrix Computations}.
\newblock Johns Hopkins University Press, Baltimore, second edition, 1989.

\bibitem{guyon2008design}
Isabelle Guyon, Constantin~F Aliferis, Gregory~F Cooper, Andr{\'e} Elisseeff,
  Jean-Philippe Pellet, Peter Spirtes, and Alexander~R Statnikov.
\newblock Design and analysis of the causation and prediction challenge.
\newblock In {\em WCCI Causation and Prediction Challenge}, pages 1--33, 2008.

\bibitem{johnson2013accelerating}
Rie Johnson and Tong Zhang.
\newblock Accelerating stochastic gradient descent using predictive variance
  reduction.
\newblock In {\em Advances in Neural Information Processing Systems 26}, pages
  315--323, 2013.

\bibitem{krahmer2011new}
Felix Krahmer and Rachel Ward.
\newblock New and improved johnson--lindenstrauss embeddings via the restricted
  isometry property.
\newblock {\em SIAM Journal on Mathematical Analysis}, 43(3):1269--1281, 2011.

\bibitem{lecun1995learning}
Yann LeCun, LD~Jackel, L{\'e}on Bottou, Corinna Cortes, John~S Denker, Harris
  Drucker, Isabelle Guyon, UA~Muller, E~Sackinger, Patrice Simard, et~al.
\newblock Learning algorithms for classification: A comparison on handwritten
  digit recognition.
\newblock {\em Neural networks: the statistical mechanics perspective},
  261:276, 1995.

\bibitem{Luenberger1984}
David~G. Luenberger.
\newblock {\em Linear and Nonlinear Programming}.
\newblock Addison-Wesley, New Jersey, second edition, 1984.

\bibitem{NIPS2016_6207}
Haipeng Luo, Alekh Agarwal, Nicol\`{o} Cesa-Bianchi, and John Langford.
\newblock Efficient second order online learning by sketching.
\newblock In {\em Advances in Neural Information Processing Systems 29}, pages
  902--910, 2016.

\bibitem{mukherjee2013parallel}
Indraneel Mukherjee, Kevin Canini, Rafael Frongillo, and Yoram Singer.
\newblock Parallel boosting with momentum.
\newblock In {\em Joint European Conference on Machine Learning and Knowledge
  Discovery in Databases}, pages 17--32. Springer Berlin Heidelberg, 2013.

\bibitem{nesterov2013introductory}
Yurii Nesterov.
\newblock {\em Introductory lectures on convex optimization: A basic course},
  volume~87.
\newblock Springer Science \& Business Media, 2013.

\bibitem{pilanci2017newton}
Mert Pilanci and Martin~J Wainwright.
\newblock Newton sketch: A near linear-time optimization algorithm with
  linear-quadratic convergence.
\newblock {\em SIAM Journal on Optimization}, 27(1):205--245, 2017.

\bibitem{RobMon51}
H.~Robbins and S.~Monro.
\newblock A stochastic approximation method.
\newblock {\em The Annals of Mathematical Statistics}, pages 400--407, 1951.

\bibitem{roosta2018sub}
Farbod Roosta-Khorasani and Michael~W Mahoney.
\newblock Sub-sampled newton methods.
\newblock {\em Mathematical Programming}, pages 1--34.

\bibitem{sra2012optimization}
Suvrit Sra, Sebastian Nowozin, and Stephen~J Wright.
\newblock {\em Optimization for machine learning}.
\newblock 2012.

\bibitem{wang2017sketching}
Jialei Wang, Jason~D Lee, Mehrdad Mahdavi, Mladen Kolar, Nathan Srebro, et~al.
\newblock Sketching meets random projection in the dual: A provable recovery
  algorithm for big and high-dimensional data.
\newblock {\em Electronic Journal of Statistics}, 11(2):4896--4944, 2017.

\bibitem{wang2018sketched}
Shusen Wang, Alex Gittens, and Michael~W Mahoney.
\newblock Sketched ridge regression: Optimization perspective, statistical
  perspective, and model averaging.
\newblock {\em Journal of Machine Learning Research}, 18(218):1--50, 2018.

\bibitem{woodruff2014sketching}
David~P Woodruff et~al.
\newblock Sketching as a tool for numerical linear algebra.
\newblock {\em Foundations and Trends{\textregistered} in Theoretical Computer
  Science}, 10(1--2):1--157, 2014.

\bibitem{wright2015coordinate}
Stephen~J Wright.
\newblock Coordinate descent algorithms.
\newblock {\em Mathematical Programming}, 151(1):3--34, 2015.

\bibitem{xu2017second}
Peng Xu, Farbod Roosta-Khorasan, and Michael~W Mahoney.
\newblock Second-order optimization for non-convex machine learning: An
  empirical study.
\newblock {\em arXiv preprint arXiv:1708.07827}, 2017.

\bibitem{xu2017newton}
Peng Xu, Farbod Roosta-Khorasani, and Michael~W Mahoney.
\newblock Newton-type methods for non-convex optimization under inexact hessian
  information.
\newblock {\em arXiv preprint arXiv:1708.07164}, 2017.

\bibitem{xu2016sub}
Peng Xu, Jiyan Yang, Farbod Roosta-Khorasani, Christopher R{\'e}, and Michael~W
  Mahoney.
\newblock Sub-sampled newton methods with non-uniform sampling.
\newblock In {\em Advances in Neural Information Processing Systems 29}, pages
  3000--3008, 2016.

\bibitem{yao2018inexact}
Zhewei Yao, Peng Xu, Farbod Roosta-Khorasani, and Michael~W Mahoney.
\newblock Inexact non-convex newton-type methods.
\newblock {\em arXiv preprint arXiv:1802.06925}, 2018.

\end{thebibliography}

\newpage

\appendix

\section{Complete Numerical Results}
\label{sec:ext_num}

In this Section, we present further numerical results, on the data sets listed in Tables \ref{tab:synth} and \ref{tab:real}.

\begin{table}[H]
\scriptsize
\centering
\setlength\tabcolsep{4pt}
\begin{minipage}{0.48\textwidth}
\centering
\caption{Synthetic Datasets \cite{mukherjee2013parallel}.}
\begin{tabular}{r|r|r|c}
 \multicolumn{1}{c|}{Dataset}                          &  \multicolumn{1}{c|}{$n$ (train; test)}                  & \multicolumn{1}{c|}{$d$}                      & \multicolumn{1}{c}{$\kappa$}                    \\ \hline\hline
\texttt{synthetic1}                      & (9000; 1000)                        & 100                       & $10^2$             \\
\texttt{synthetic2}                       & (9000; 1000)                        & 100                       & $10^4$               \\ 
\texttt{synthetic3}                       & (90000; 10000)                      & 100                       & $10^4$                \\
\texttt{synthetic4}                       & (90000; 10000)                      & 100                       & $10^5$                  \\ 
\end{tabular}
\label{tab:synth} 
\end{minipage}%
\hfill
\begin{minipage}{0.48\textwidth}
\centering
\caption{Real Datasets.} 
\begin{tabular}{l|r|r|c|c}
\multicolumn{1}{c|}{Dataset} & \multicolumn{1}{c|}{$n$ (train; test)} & \multicolumn{1}{c|}{$d$} & \multicolumn{1}{c|}{$\kappa$} 
&\multicolumn{1}{c}{ Source}
\\ \hline \hline 
\texttt{australian} & (621; 69) & 14 & $10^6$, $10^2$& \cite{CC01a} \\
\texttt{reged} & (450; 50) & 999 & $10^4$&\cite{guyon2008design} \\
\texttt{mushroom} & (5,500; 2,624) & 112 & $10^2$& \cite{CC01a}\\
\texttt{ijcnn1} &  (35,000; 91,701) & 22 & $10^2$&\cite{CC01a}\\
\texttt{cina} & (14,430; 1603) & 132 & $10^9$ &\cite{guyon2008design}\\
\texttt{ISOLET}  & (7,017; 780) & 617 & $10^4$ & \cite{CC01a}\\
\texttt{gisette} & (6,000; 1,000) & 5,000 & $10^4$&\cite{CC01a}\\
\texttt{cov} & (522,911; 58101) & 54 & $10^3$& \cite{CC01a}\\
\texttt{MNIST}  & (60,000; 10,000) & 748 & $10^5$& \cite{lecun1995learning}\\
\texttt{rcv1} & (20,242; 677,399)  & 47,236 & $10^3$&\cite{CC01a}\\
\texttt{real-sim} & (65,078; 7,231)  & 20,958 & $10^3$&\cite{CC01a}\\
\end{tabular} 
 \label{tab:real} 
\end{minipage}
\end{table}

The synthetic data sets were generated randomly as described in \cite{mukherjee2013parallel}. These data sets were created to have different number of samples ($n$) and different condition numbers ($\kappa$), as well as a wide spectrum of eigenvalues (see Figures \ref{fig: eigs_synthetic1}-\ref{fig: eigs_synthetic6}). We also explored the performance of the methods on popular machine learning data sets chosen to have different number of samples ($n$), different number of variables ($d$) and different condition numbers ($\kappa$). We used the testing data sets where provided. For data sets where a testing set was not provided, we randomly split the data sets ($90\%, 10\%$) for training and testing, respectively.

We focus on logistic regression classification; the objective function is given by 
 \begin{align} 	\label{eq:bin_class1}
  \min_{w \in \mathbb{R}^d} F(w) = \frac{1}{n}\sum_{i=1}^{n}\log(1+e^{-y^i(w^Tx^i)}) + \frac{\lambda}{2} \|w\|^2,
\end{align}
where $ (x^i, y^i)_{i=1}^n$  denote the training examples and $\lambda = \frac1n$
is the regularization parameter. 

For the experiments in Sections \ref{sec:performance_synth}, \ref{sec:performance} and \ref{sec:sensitivity} (Figures \ref{fig: best_synthetic_app1}-\ref{fig: sensitivity4_app}), we ran four methods: SVRG, Newton-Sketch, SSN-SGI and SSN-CG. The implementation details of the methods are given in Sections \ref{sec:pract_methods} and \ref{sec:num_study}. In Section \ref{sec:performance_synth} we show the performance of the methods on the synthetic data sets, in Section \ref{sec:performance} we show the performance of the methods on popular machine learning data sets and in Section \ref{sec:sensitivity} we examine the sensitivity of the methods on the choice of the hyper-parameters. We report training error vs. iterations, and training error vs. effective gradient evaluations (defined as the sum of all function evaluations, gradient evaluations and Hessian-vector products). In Sections \ref{sec:performance_synth} and \ref{sec:performance} we also report testing loss vs. effective gradient evaluations.


\newpage

\subsection{Performance of methods - Synthetic Data sets}
\label{sec:performance_synth}

Figure \ref{fig: best_synthetic_app1} illustrates the performance of the methods on four synthetic data sets. These problems were constructed to have high condition numbers $10^2 - 10^5$, which is the setting where Newton methods show their strength compared to a first order methods such as SVRG, and to have wide spectrum of eigenvalues, which is the setting that is unfavorable for the CG method.

As is clear from Figure \ref{fig: best_synthetic_app1}, the Newton-Sketch and SSN-CG methods outperform the SVRG and SSN-SGI methods. This is expected due to the ill-conditioning of the problems. The Newton-Sketch method performs slightly better than the SSN-CG method; this can be attributed, primarily, to the fact that the component function in these problems are highly dissimilar. The Hessian approximations constructed by the Newton-Sketch method, use information from all component functions, and thus, better approximate the true Hessian. It is interesting to observe that in the initial stage of these experiments, the SVRG method outperforms the second order methods, however, the progress made by the first order method quickly stagnates.

 
\begin{figure}[H]
\begin{centering}
\includegraphics[width=1\textwidth]{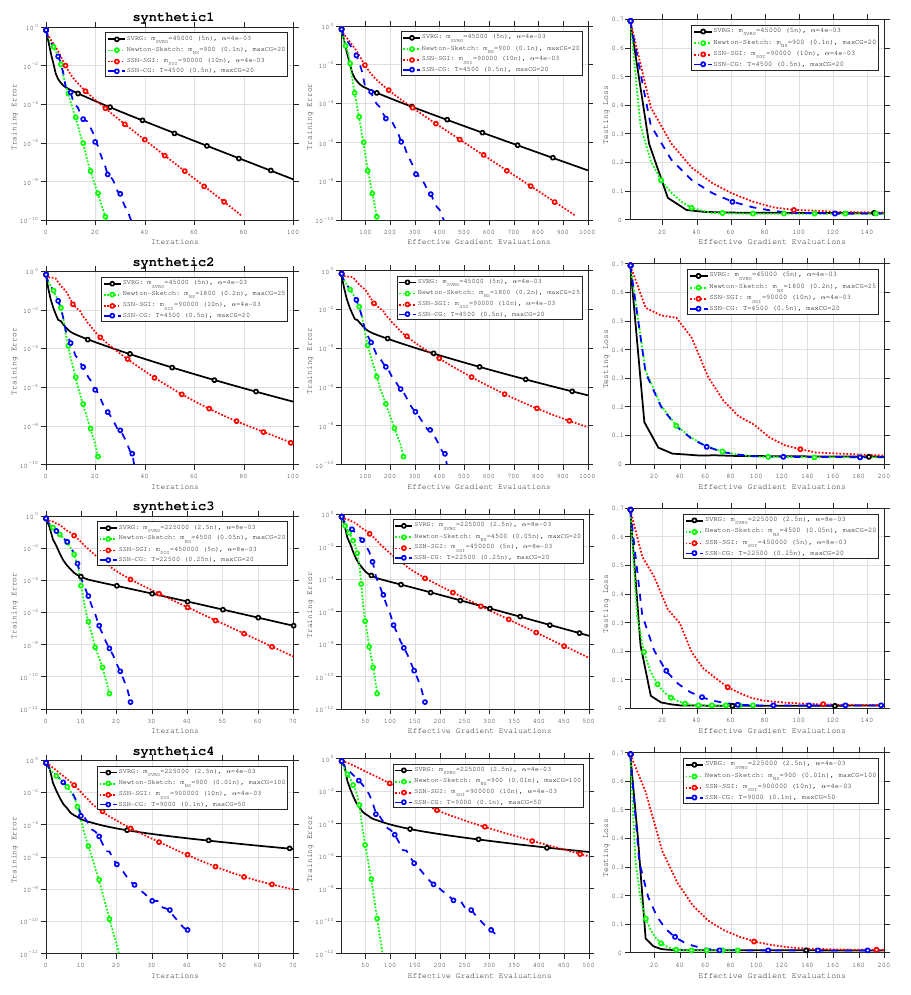}
\par\end{centering}
\caption{Comparison of SSN-SGI, SSN-CG, Newton-Sketch and SVRG on four synthetic data sets: \texttt{synthetic1}; \texttt{synthetic2}; \texttt{synthetic3}; \texttt{synthetic4}. Each row corresponds to a different problem. The first column reports training error ($F(w_k) - F^*$) vs iteration, and the second column reports training error vs effective gradient evaluations, and the third column reports testing loss vs effective gradient evaluations. The hyper-parameters of each method were tuned to achieve the best training error in terms of effective gradient evaluations. The cost of forming the linear systems in the Newton-Sketch method is ignored.}
\label{fig: best_synthetic_app1}
\end{figure}


\subsection{Performance of methods - Machine Learning Data sets}
\label{sec:performance}

Figures \ref{fig: best_real1_app}--\ref{fig: best_real5_app} illustrate the performance of the methods on 12 popular machine learning data sets. As mentioned in Section \ref{results}, the performance of the methods is highly dependent on the problem characteristics. On the one hand, these figures show that there exists an SVRG \emph{sweet-spot}, a regime in which the SVRG method outperforms all stochastic second order methods investigated in this paper; however, there are other problems for which it is efficient to incorporate some form of curvature information in order to avoid stagnation due to ill-conditioning. For such problems, the SVRG method makes slow progress due to the need for a very small step length, which is required to ensure that the method does not diverge.

Note: We did not run Newton-Sketch on the \texttt{cov} data set (Figure \ref{fig: best_real5_app}). This was due to the fact that the number of data points ($n$) in this data set is  large, and so it was prohibitive (in terms of memory) to create the padded sketch matrices.

\begin{figure}[H]
\begin{centering}
\includegraphics[width=1\textwidth]{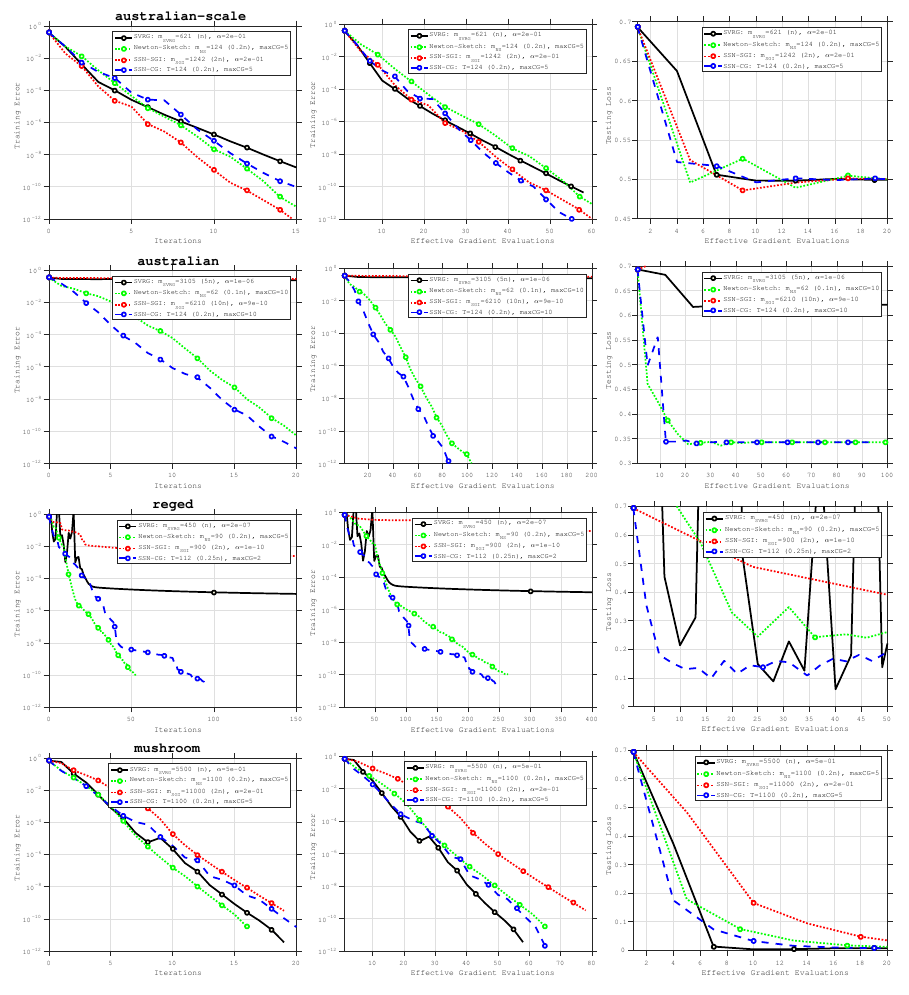}
\par\end{centering}
\caption{Comparison of SSN-SGI, SSN-CG, Newton-Sketch and SVRG on four machine learning data sets: \texttt{australian-scale}; \texttt{australian}; \texttt{reged}; \texttt{mushroom}. Each row corresponds to a different problem. The first column reports training error ($F(w_k) - F^*$) vs iteration, and the second column reports training error vs effective gradient evaluations, and the third column reports testing loss vs effective gradient evaluations. The hyper-parameters of each method were tuned to achieve the best training error in terms of effective gradient evaluations. The cost of forming the linear systems in the Newton-Sketch method is ignored.}
\label{fig: best_real1_app}
\end{figure}


\begin{figure}[H]
\begin{centering}
\includegraphics[width=1\textwidth]{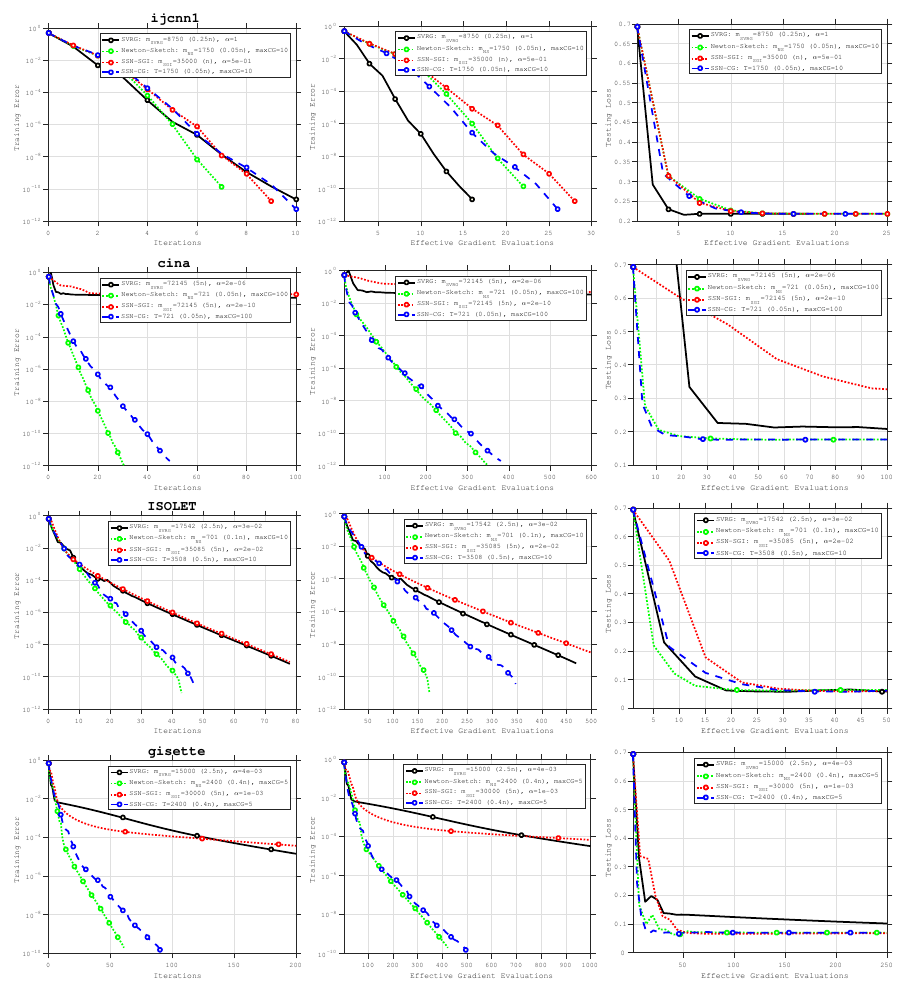}
\par\end{centering}
\caption{Comparison of SSN-SGI, SSN-CG, Newton-Sketch and SVRG on four machine learning data sets: \texttt{ijcnn1}; \texttt{cina}; \texttt{ISOLET}; \texttt{gisette}. Each row corresponds to a different problem. The first column reports training error ($F(w_k) - F^*$) vs iteration, and the second column reports training error vs effective gradient evaluations, and the third column reports testing loss vs effective gradient evaluations. The hyper-parameters of each method were tuned to achieve the best training error in terms of effective gradient evaluations. The cost of forming the linear systems in the Newton-Sketch method is ignored.}
\label{fig: best_real3_app}
\end{figure}


\begin{figure}[H]
\begin{centering}
\includegraphics[width=1\textwidth]{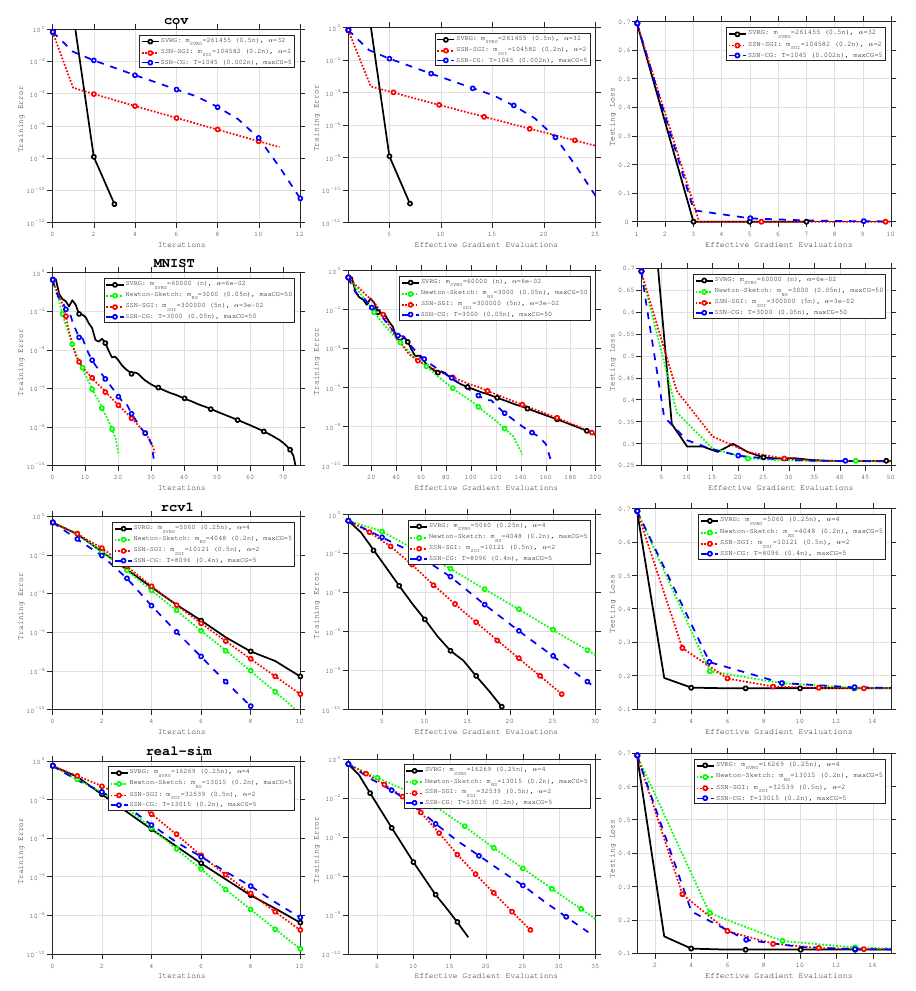}
\par\end{centering}
\caption{Comparison of SSN-SGI, SSN-CG, Newton-Sketch and SVRG on four machine learning data sets: \texttt{cov}; \texttt{MNIST}; \texttt{rcv1}; \texttt{real-sim}. Each row corresponds to a different problem. The first column reports training error ($F(w_k) - F^*$) vs iteration, and the second column reports training error vs effective gradient evaluations, and the third column reports testing loss vs effective gradient evaluations. The hyper-parameters of each method were tuned to achieve the best training error in terms of effective gradient evaluations. The cost of forming the linear systems in the Newton-Sketch method is ignored.}
\label{fig: best_real5_app}
\end{figure}

\newpage
\subsection{Sensitivity of methods}
\label{sec:sensitivity}

In Figures \ref{fig: sensitivity1_app}--\ref{fig: sensitivity4_app} we illustrate the sensitivity of the methods to the choice of hyper-parameters for 4 different data sets: \texttt{synthetic1}, \texttt{australian-scale}, \texttt{mushroom} and \texttt{ijcnn}. One needs to be careful when interpreting the sensitivity results. It appears that all methods have similar sensitivity to the chosen hyper-parameters, however, we should note that this is not the case. More specifically, if the step length $\alpha$ is not chosen appropriately in the SVRG and SSN-SGI methods, these methods can diverge ($\alpha$ too large) or make very slow progress ($\alpha$ too small). We have excluded these runs from the figures presented in this section. Empirically, we observe that the Newton-Sketch and SSN-CG methods are more robust to the choice of the hyper-parameters, and easier to tune. For almost all choices of $m_{\text{NS}}$ and $T$, respectively, and $max_{\text{CG}}$ the methods converge, with the choice of hyper-parameters only affecting the speed of convergence.

\begin{figure}[H]
\begin{centering}
\includegraphics[width=0.76\textwidth]{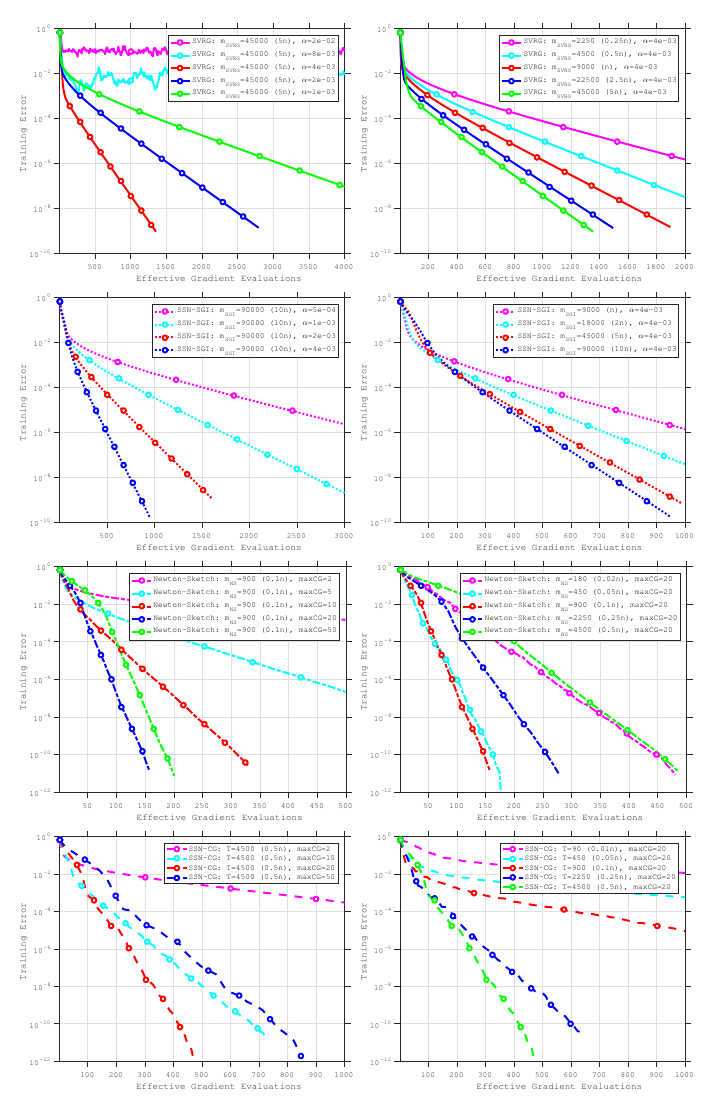}
\par\end{centering}
\caption{\texttt{synthetic1}: Sensitivity of SVRG, SSN-SGI, Newton-Sketch and SSN-CG to the choice of hyper-parameters. Each row depicts a different method. The first column shows the performance of a given method when varying the first hyper-parameter, while the second column shows the performance when varying the second hyper-parameter. Note, for SVRG and Newton-SGI we only illustrate the performance of the methods in hyper-parameter regimes in which the methods did not diverge. }
\label{fig: sensitivity1_app}
\end{figure}

\begin{figure}[H]
\begin{centering}
\includegraphics[width=0.76\textwidth]{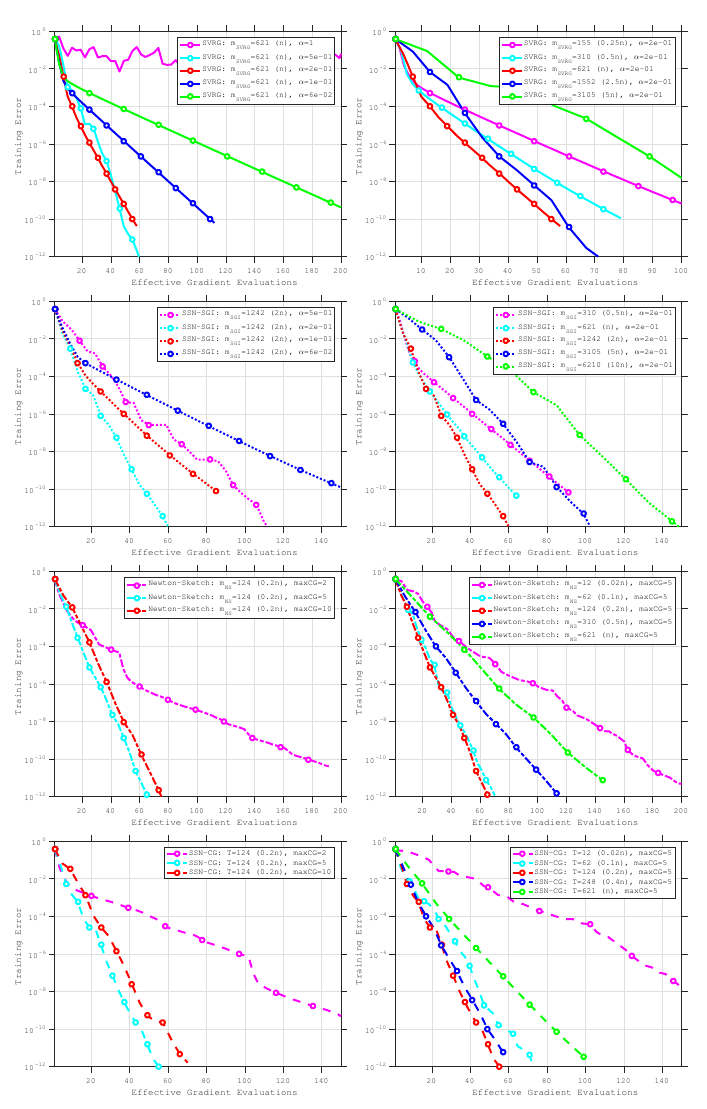}
\par\end{centering}
\caption{\texttt{australian-scale}: Sensitivity of SVRG, SSN-SGI, Newton-Sketch and SSN-CG to the choice of hyper-parameters. Each row depicts a different method. The first column shows the performance of a given method when varying the first hyper-parameter, while the second column shows the performance when varying the second hyper-parameter. Note, for SVRG and Newton-SGI we only illustrate the performance of the methods in hyper-parameter regimes in which the methods did not diverge. }
\label{fig: sensitivity2_app}
\end{figure}

\begin{figure}[H]
\begin{centering}
	\includegraphics[width=0.76\textwidth]{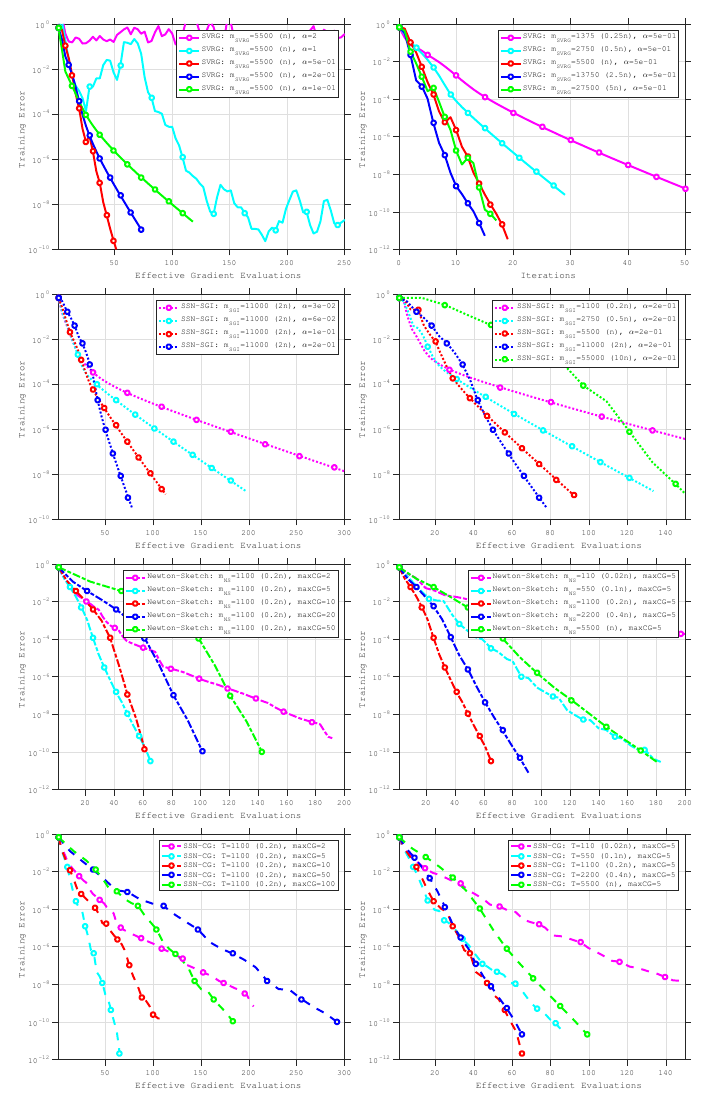}
\par\end{centering}
\caption{\texttt{mushroom}: Sensitivity of SVRG, SSN-SGI, Newton-Sketch and SSN-CG to the choice of hyper-parameters. Each row depicts a different method. The first column shows the performance of a given method when varying the first hyper-parameter, while the second column shows the performance when varying the second hyper-parameter. Note, for SVRG and Newton-SGI we only illustrate the performance of the methods in hyper-parameter regimes in which the methods did not diverge. }
\label{fig: sensitivity3_app}
\end{figure}

\begin{figure}[H]
\begin{centering}
\includegraphics[width=0.76\textwidth]{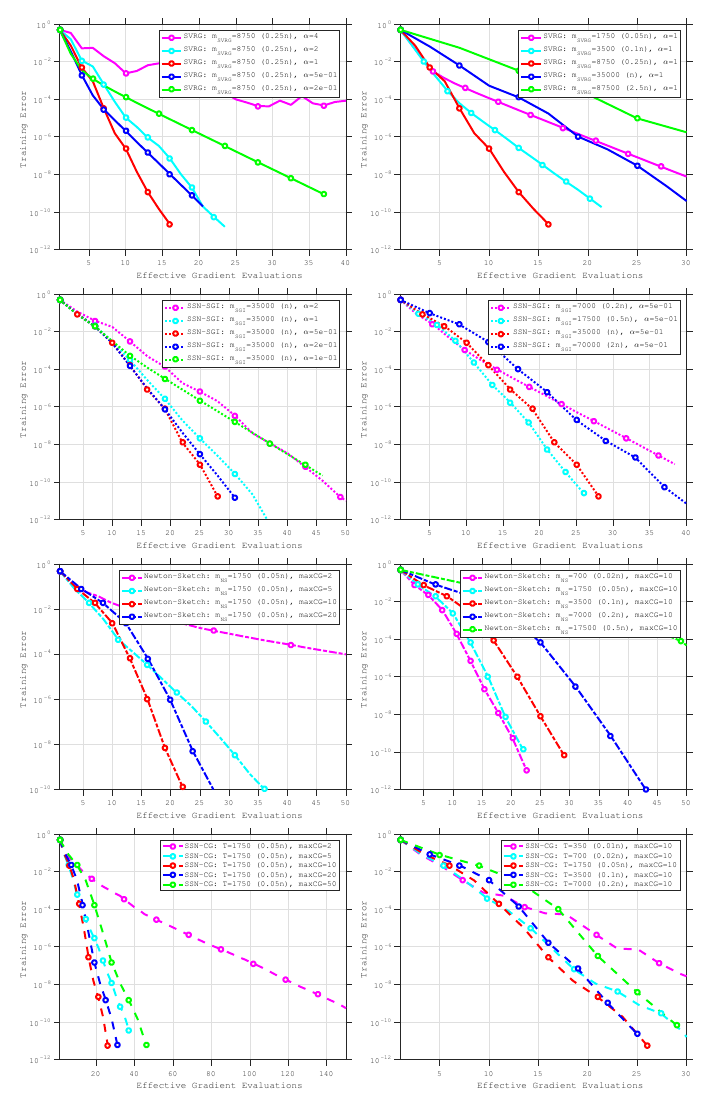}
\par\end{centering}
\caption{\texttt{ijcnn1}: Sensitivity of SVRG, SSN-SGI, Newton-Sketch and SSN-CG to the choice of hyper-parameters. Each row depicts a different method. The first column shows the performance of a given method when varying the first hyper-parameter, while the second column shows the performance when varying the second hyper-parameter. Note, for SVRG and Newton-SGI we only illustrate the performance of the methods in hyper-parameter regimes in which the methods did not diverge. }
\label{fig: sensitivity4_app}
\end{figure}

\newpage
\section{Eigenvalues}
\label{sec:eigs}

In this Section, we present the eigenvalue spectrum for different data sets and different subsample sizes. Here $T$ denotes the number of subsamples used in the subsampled Hessian and $m_{\text{NS}}$ denotes the number of rows of the Sketch matrix. In order to make a fair comparison, we chose $T = m_{\text{NS}}$ for each figure.

To calculate the eigenvalues we used the following procedure. For each data set and subsample size we: 
\begin{enumerate}
	\item Computed the true Hessian 
		\begin{align*}		
			\nabla^2 F(w^*) = \frac{1}{n} \sum_{i}^n \nabla^2 F_i(w^*) ,
		\end{align*}
	 of \eqref{eq:bin_class} at $w^*$, and computed the eigenvalues of the true Hessian matrix (red).
	\item Computed 10 different subsampled Hessians of \eqref{eq:bin_class} at the optimal point $w^*$ using different samples $T_j\subset \{1,2,\ldots,n \}$ for $j=1,2,\ldots,10$ ($T_j = |T|$)
		\begin{align*}		
			\nabla^2 F_{T_j}(w^*) = \frac{1}{\left| T_j \right|} \sum_{i \in T_j} \nabla^2 F_i(w^*) ,
		\end{align*}
	 computed the eigenvalues of each subsampled Hessian and took the average of the eigenvalues across the 10 replications (blue).
	\item Computed 10 different sketched Hessians of \eqref{eq:bin_class} at the optimal point $w^*$ using different sketch matrices $S^j \in \mathbb{R}^{m_{\text{NS}} \times n}$ for $j=1,2,\ldots,10$
	\begin{align*}		
			\nabla^2 F_{S^j}(w^*) =   \Big( \big((S^j)\nabla^2 F(w^*)^{\sfrac{1}{2}}\big)^TS^j\nabla^2 F(w^*)^{\sfrac{1}{2}}\Big),
		\end{align*}
	computed the eigenvalues of each sketched Hessian and took the average of the eigenvalues across the 10 replications (green).
\end{enumerate}
All eigenvalues were computed in MATLAB using the function \texttt{eig}. Figures \ref{fig: eigs_synthetic1}--\ref{fig: MNIST} show the distribution of the eigenvalues for different data sets. Each figure represents one data set and depicts the eigenvalue distribution for three different subsample and sketch sizes. The red marks, blue squares and green circles represent the eigenvalues of the true Hessian, the average eigenvalues of the subsampled Hessians and the average eigenvalues of the sketched Hessians, respectively. Since we computed 10 different subsampled and sketched Hessians, we also show the upper and lower bounds of each eigenvalue with error bars.

As is clear from the figures below, contingent on a reasonable subsample size and sketch size, the eigenvalue spectra of the subsampled and sketched Hessians are able to capture the eigenvalue spectrum of the true Hessian. It appears however, that the eigenvalue spectra of the sketched Hessians are closer to the spectra of the true Hessian. By this we mean both that with fewer rows of the sketch matrix, the approximations of the average eigenvalues of the sketched Hessians are closer to the true eigenvalues of the Hessian, and that the error bars of the sketched Hessians are smaller than those of the subsampled Hessians. This is not surprising as the sketched Hessians use information from all components functions whereas the subsampled Hessians are constructed from a subset of the component functions. This is interesting in itself, however, the main reasons we present these results is to demonstrate that the eigenvalue distributions of machine learning problems appear to have favorable distributions for CG.

\begin{figure}[H]
\begin{centering}
\includegraphics[width=0.9\textwidth]{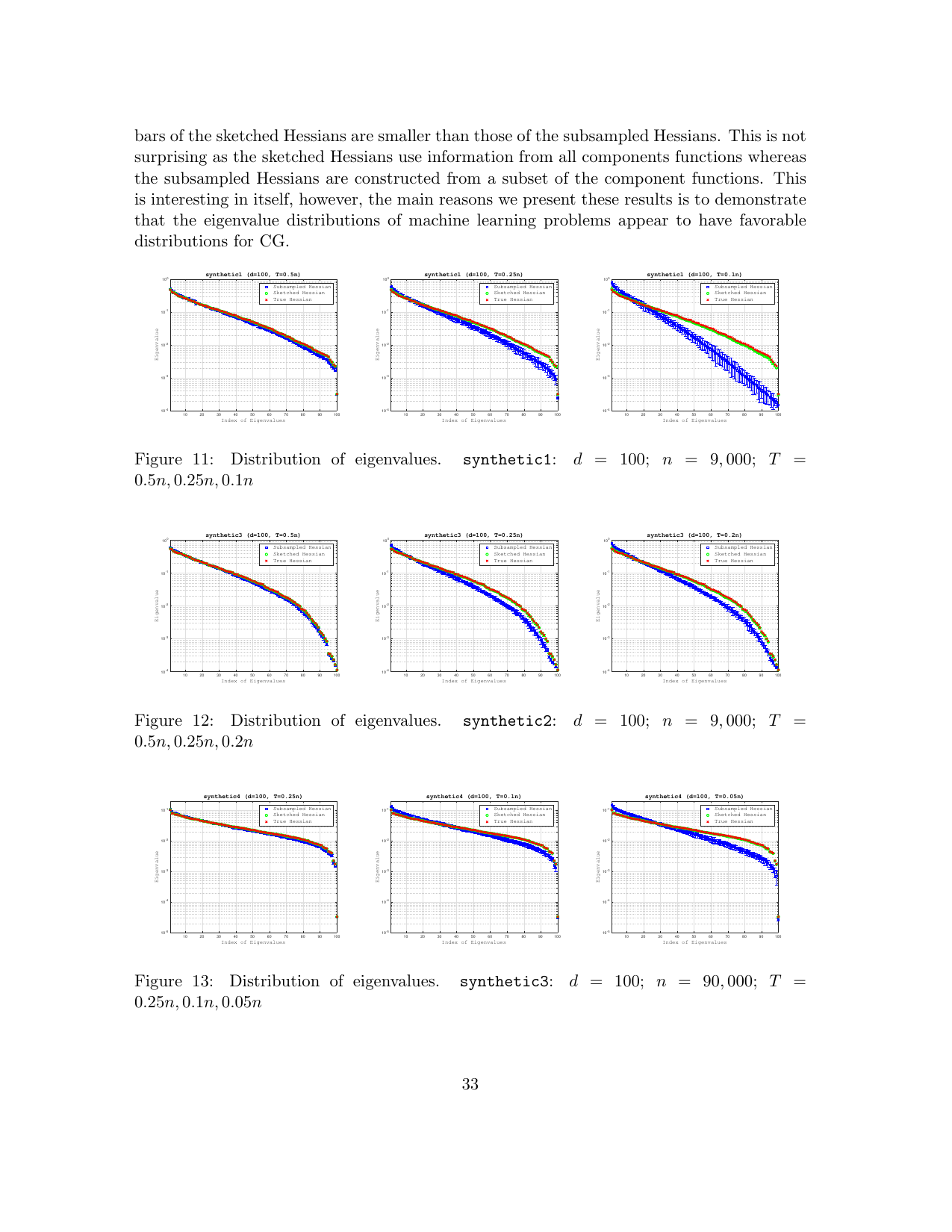}
\par\end{centering}
\caption{Distribution of eigenvalues. \texttt{synthetic1}:  $d = 100$; $n = 9,000$; $T = 0.5n, 0.25n, 0.1n$}
\label{fig: eigs_synthetic1}
\end{figure}

\begin{figure}[H]
\begin{centering}
\includegraphics[width=0.9\textwidth]{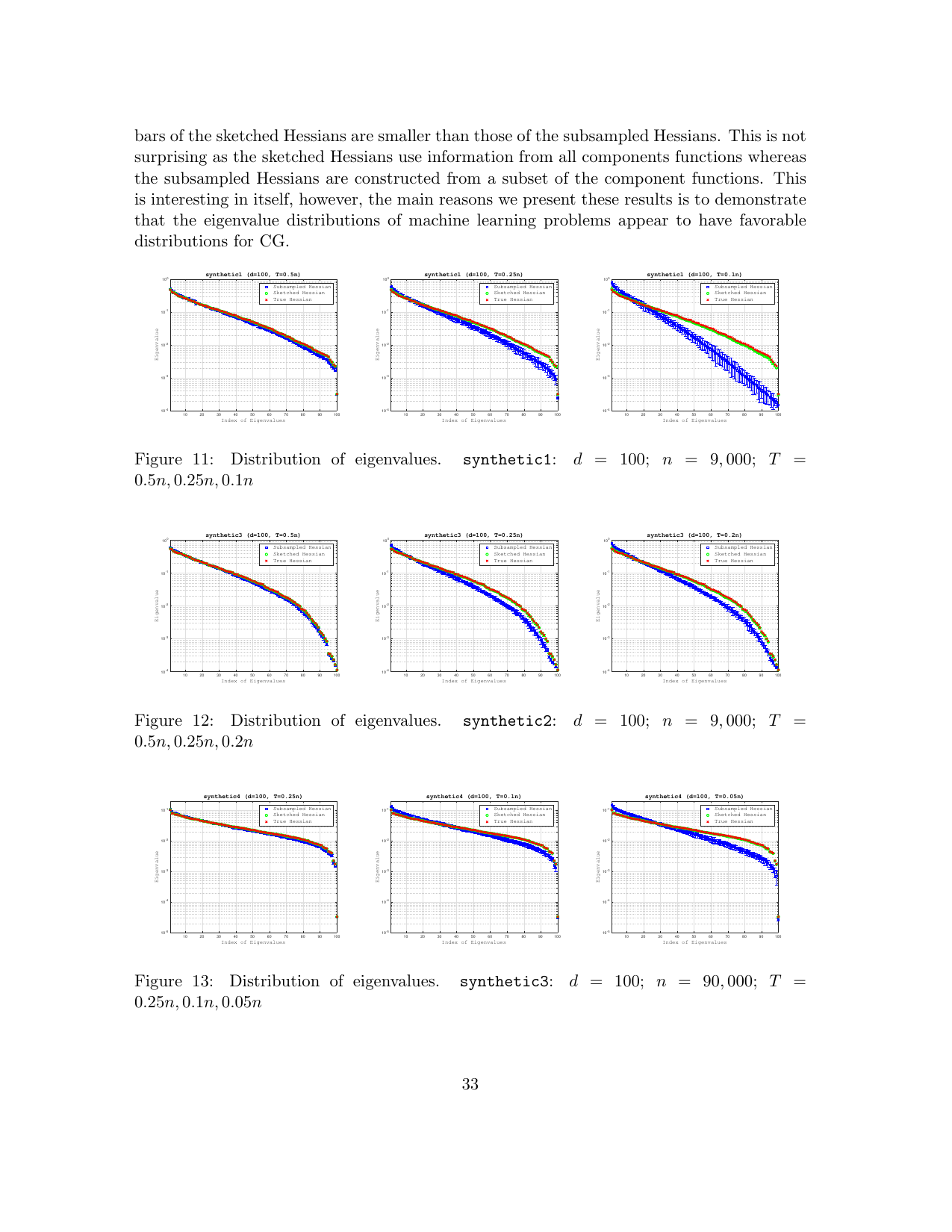}
\par\end{centering}
\caption{Distribution of eigenvalues. \texttt{synthetic2}:  $d = 100$; $n = 9,000$; $T = 0.5n, 0.25n, 0.2n$}
\label{fig: eigs_synthetic3}
\end{figure}

\begin{figure}[H]
\begin{centering}
\includegraphics[width=0.9\textwidth]{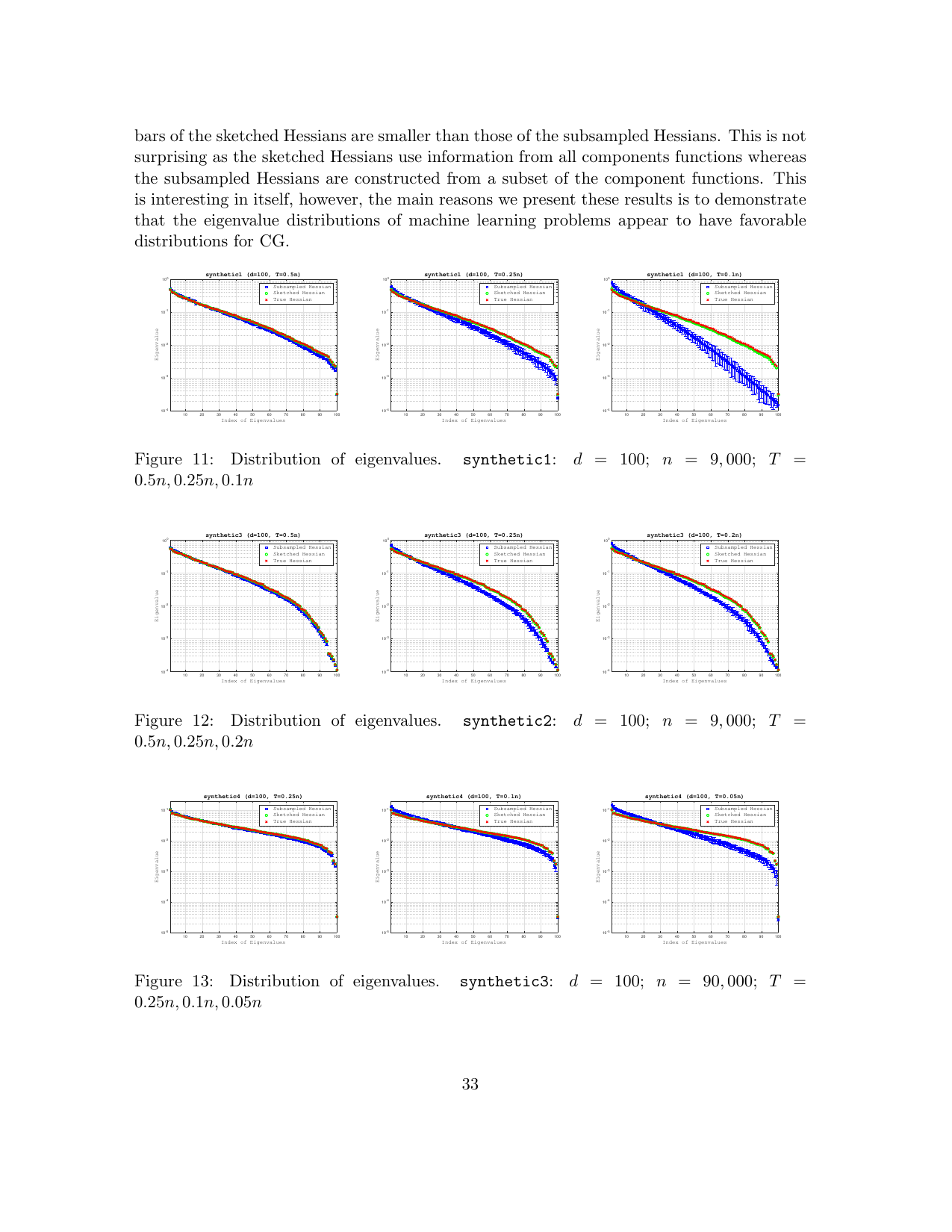}
\par\end{centering}
\caption{Distribution of eigenvalues. \texttt{synthetic3}: $d = 100$; $n = 90,000$; $T = 0.25n, 0.1n, 0.05n$}
\label{fig: eigs_synthetic4}
\end{figure}

\begin{figure}[H]
\begin{centering}
\includegraphics[width=0.9\textwidth]{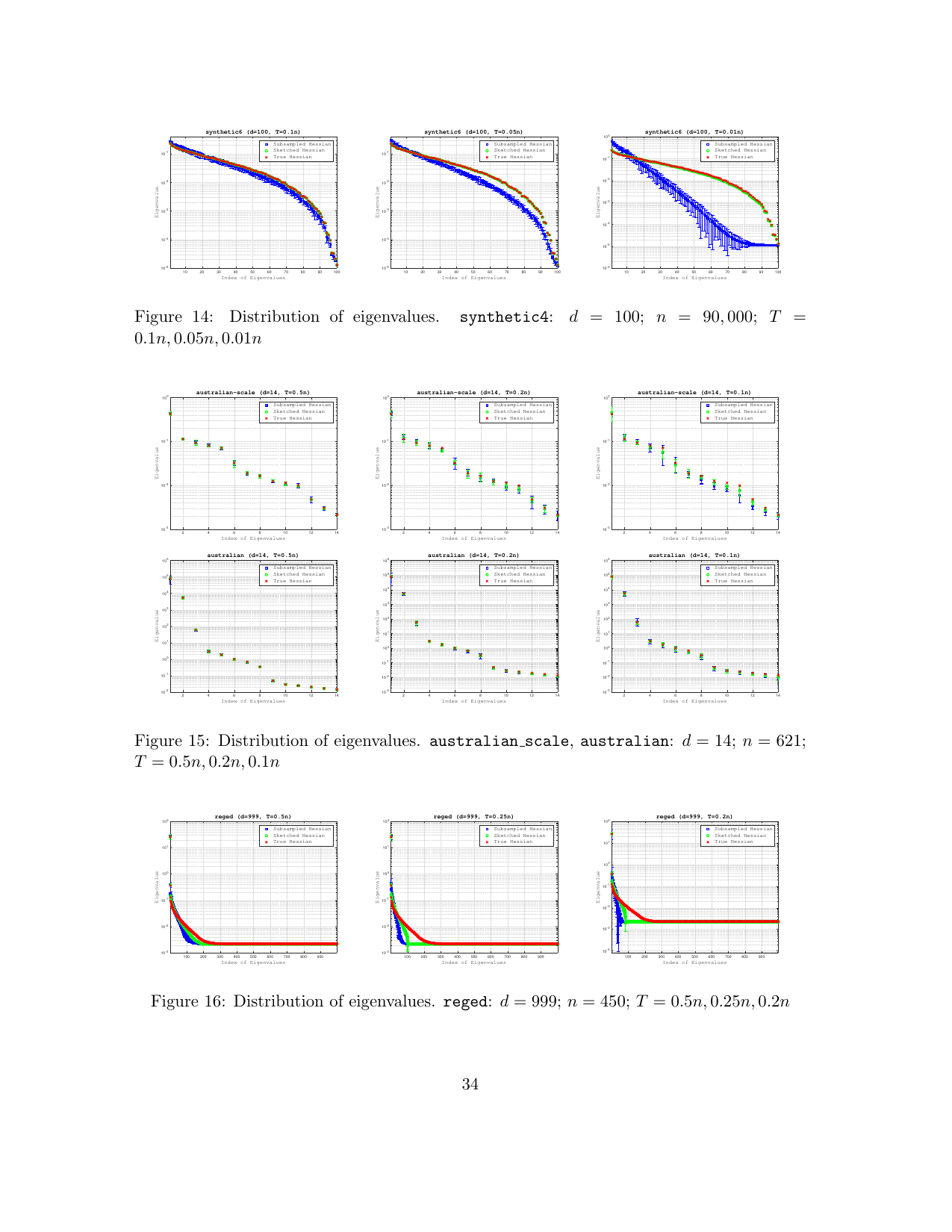}
\par\end{centering}
\caption{Distribution of eigenvalues. \texttt{synthetic4}:  $d = 100$; $n = 90,000$; $T = 0.1n, 0.05n, 0.01n$}
\label{fig: eigs_synthetic6}
\end{figure}

\begin{figure}[H]
\begin{centering}
\includegraphics[width=0.9\textwidth]{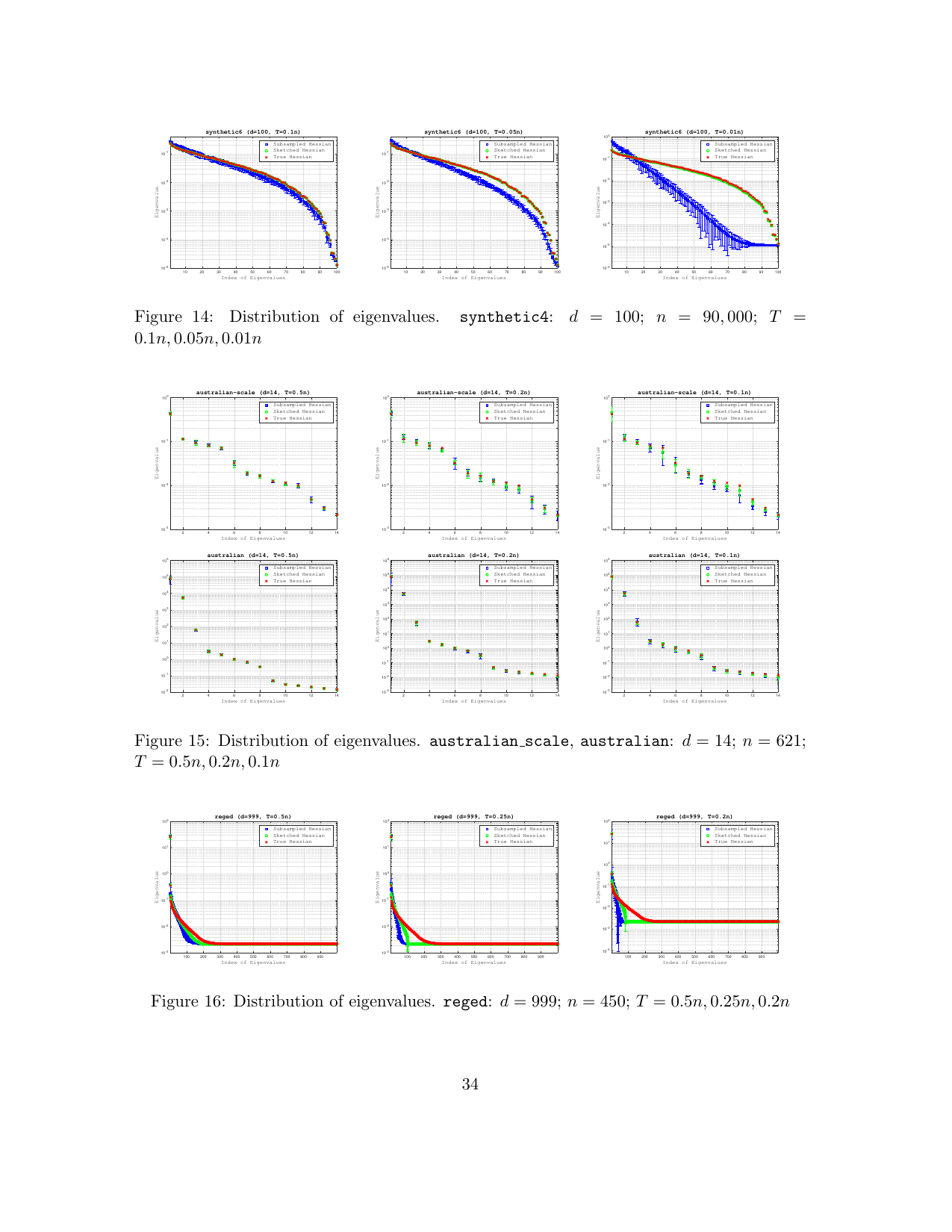}
\par\end{centering}
\caption{Distribution of eigenvalues. \texttt{australian\_scale}, \texttt{australian}: $d = 14$; $n = 621$; $T = 0.5n, 0.2n, 0.1n$}
\label{fig: australian}
\end{figure}

\begin{figure}[H]
\begin{centering}
	\includegraphics[width=0.9\textwidth]{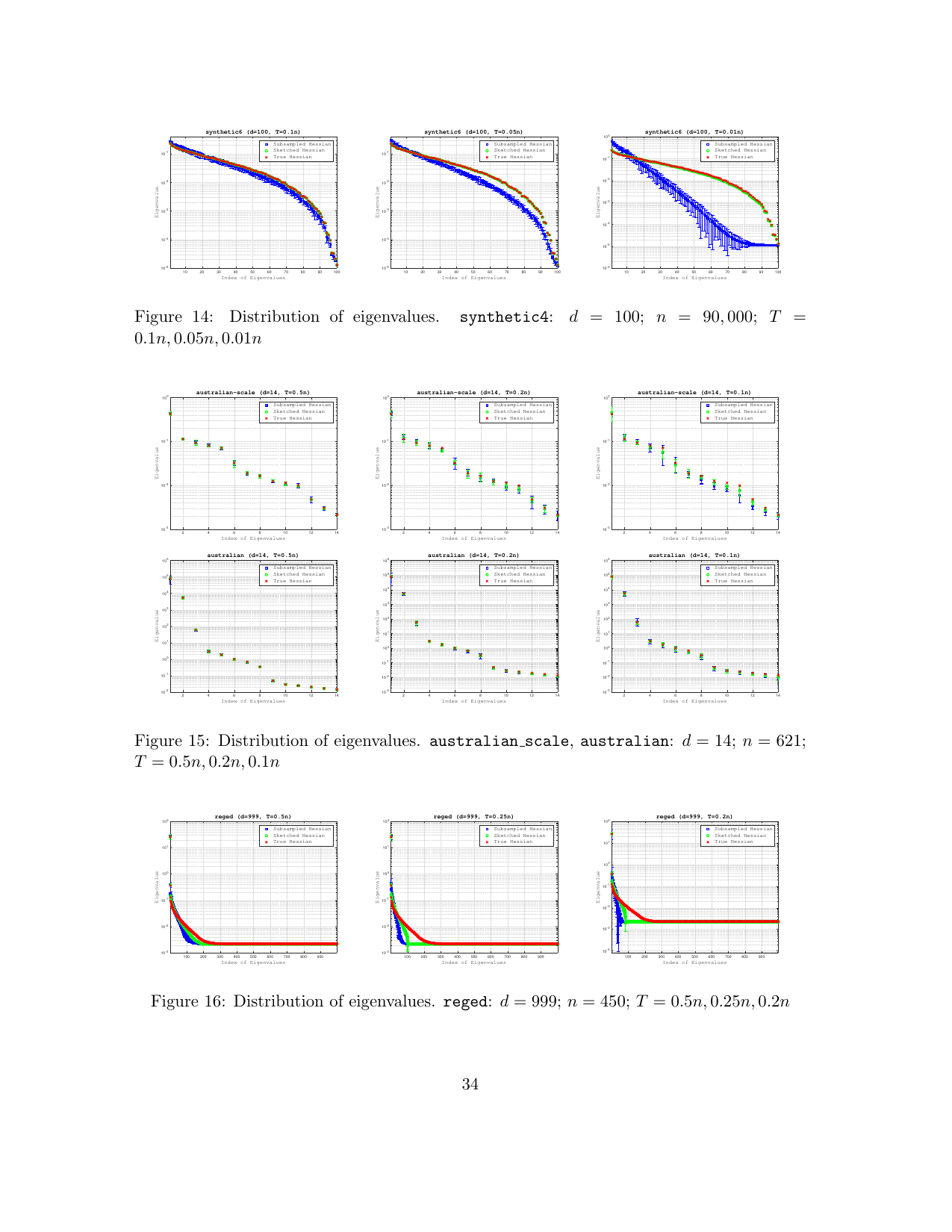}
\par\end{centering}
\caption{Distribution of eigenvalues. \texttt{reged}: $d = 999$; $n = 450$; $T = 0.5n, 0.25n, 0.2n$}
\label{fig: reged}
\end{figure}

\begin{figure}[H]
\begin{centering}
\includegraphics[width=0.9\textwidth]{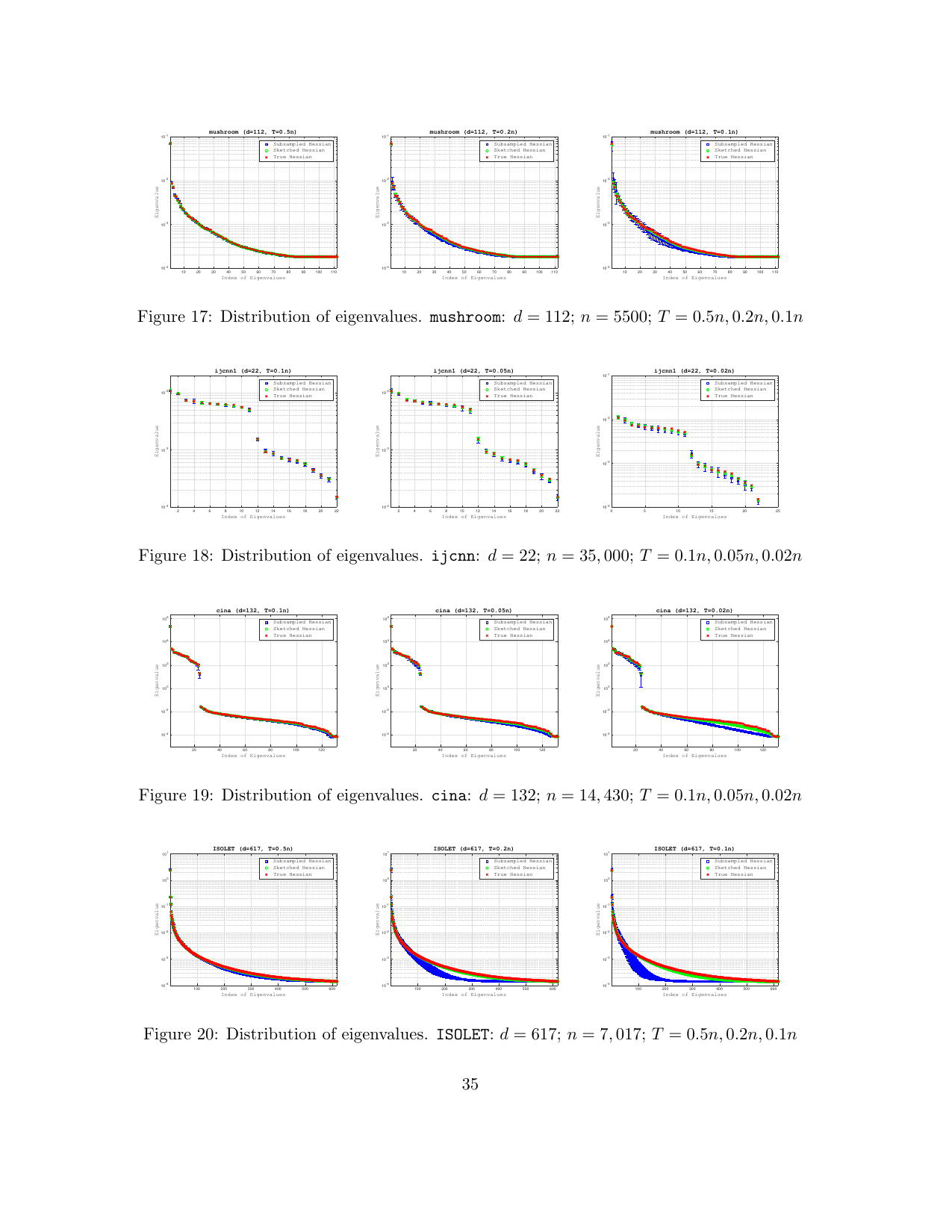}
\par\end{centering}
\caption{Distribution of eigenvalues. \texttt{mushroom}: $d = 112$; $n = 5500$; $T = 0.5n, 0.2n, 0.1n$}
\label{fig: mushroom}
\end{figure}

\begin{figure}[H]
\begin{centering}
\includegraphics[width=0.9\textwidth]{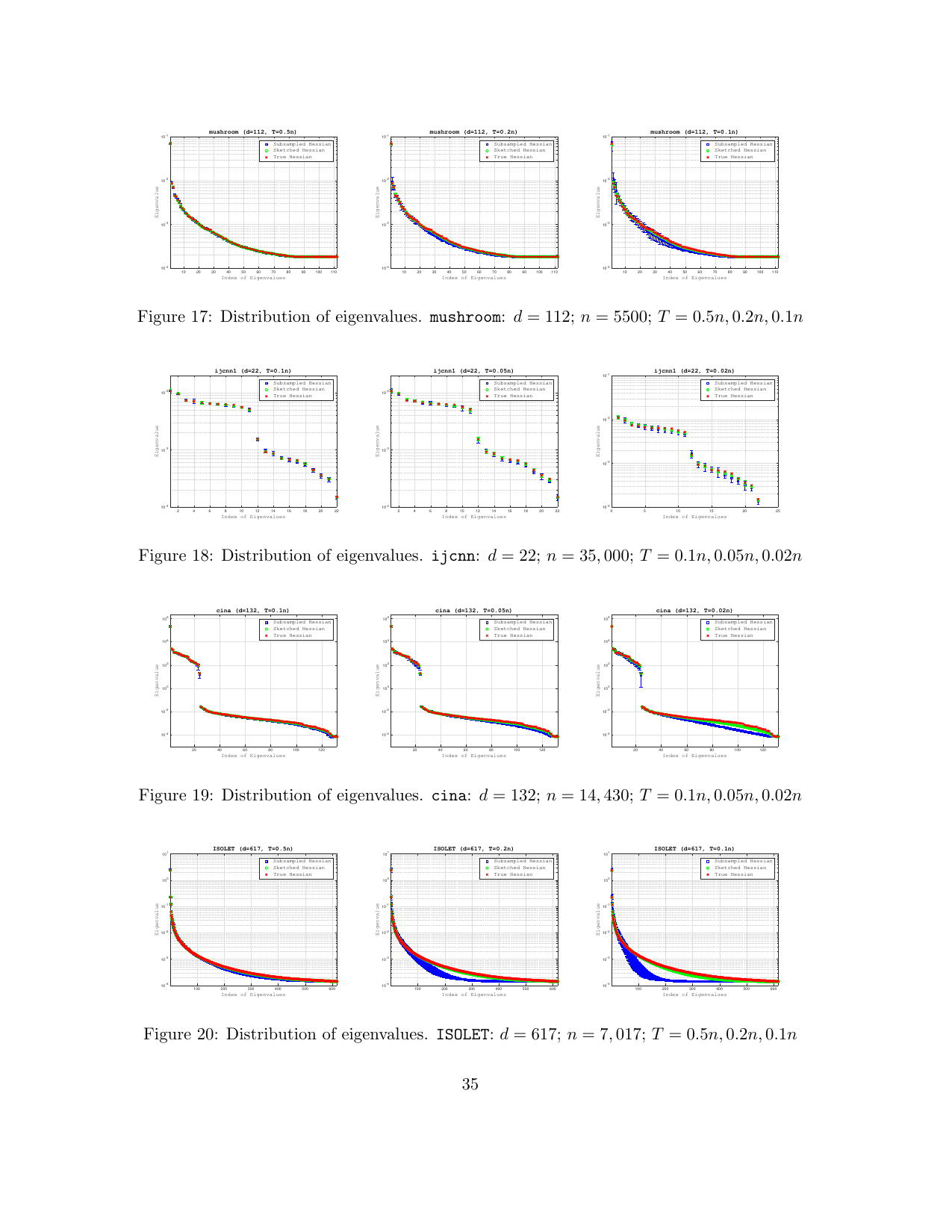}
\par\end{centering}
\caption{Distribution of eigenvalues. \texttt{ijcnn}: $d = 22$; $n = 35,000$; $T = 0.1n, 0.05n, 0.02n$}
\label{fig: ijcnn}
\end{figure}

\begin{figure}[H]
\begin{centering}
\includegraphics[width=0.9\textwidth]{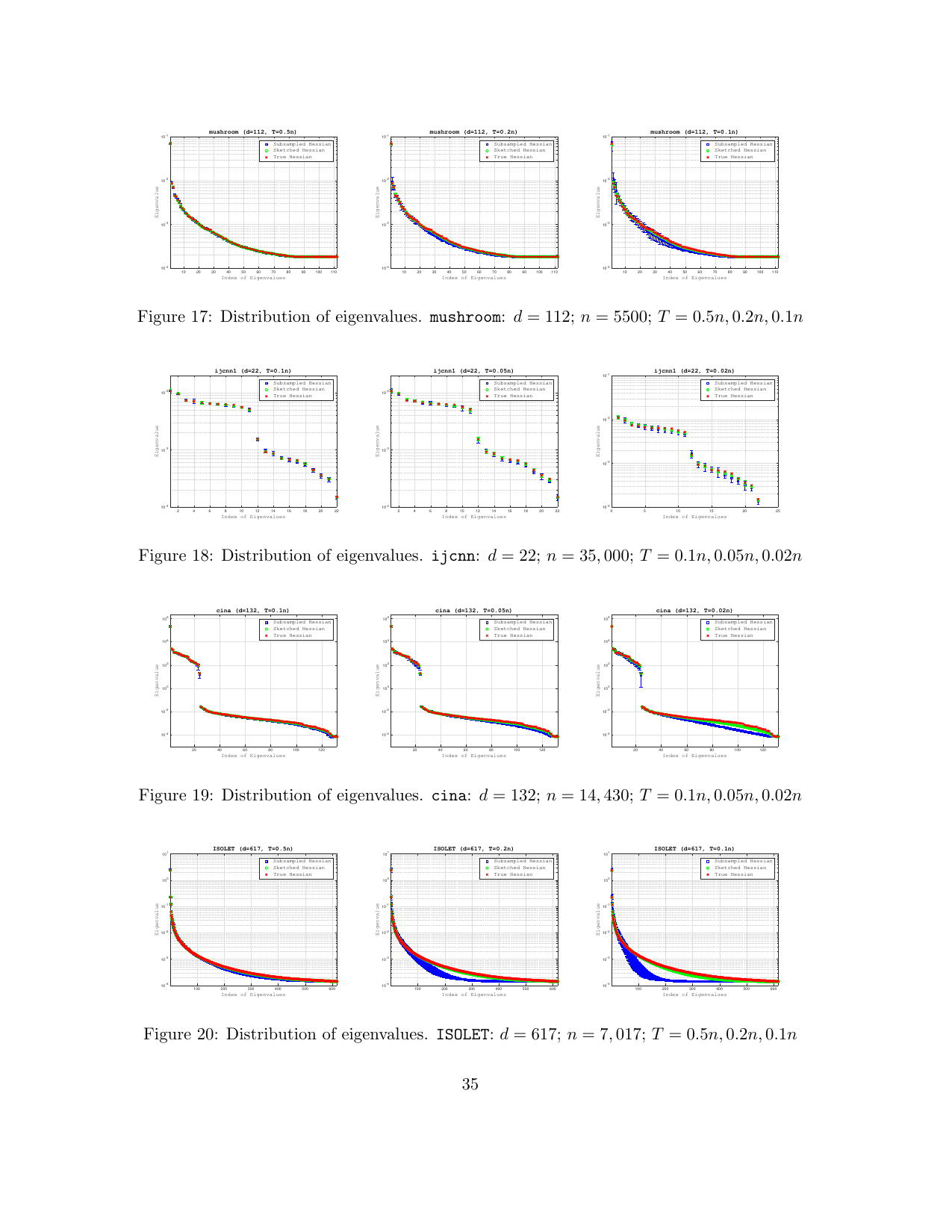}
\par\end{centering}
\caption{Distribution of eigenvalues. \texttt{cina}: $d = 132$; $n = 14,430$; $T = 0.1n, 0.05n, 0.02n$}
\label{fig: cina}
\end{figure}

\begin{figure}[H]
\begin{centering}
\includegraphics[width=0.9\textwidth]{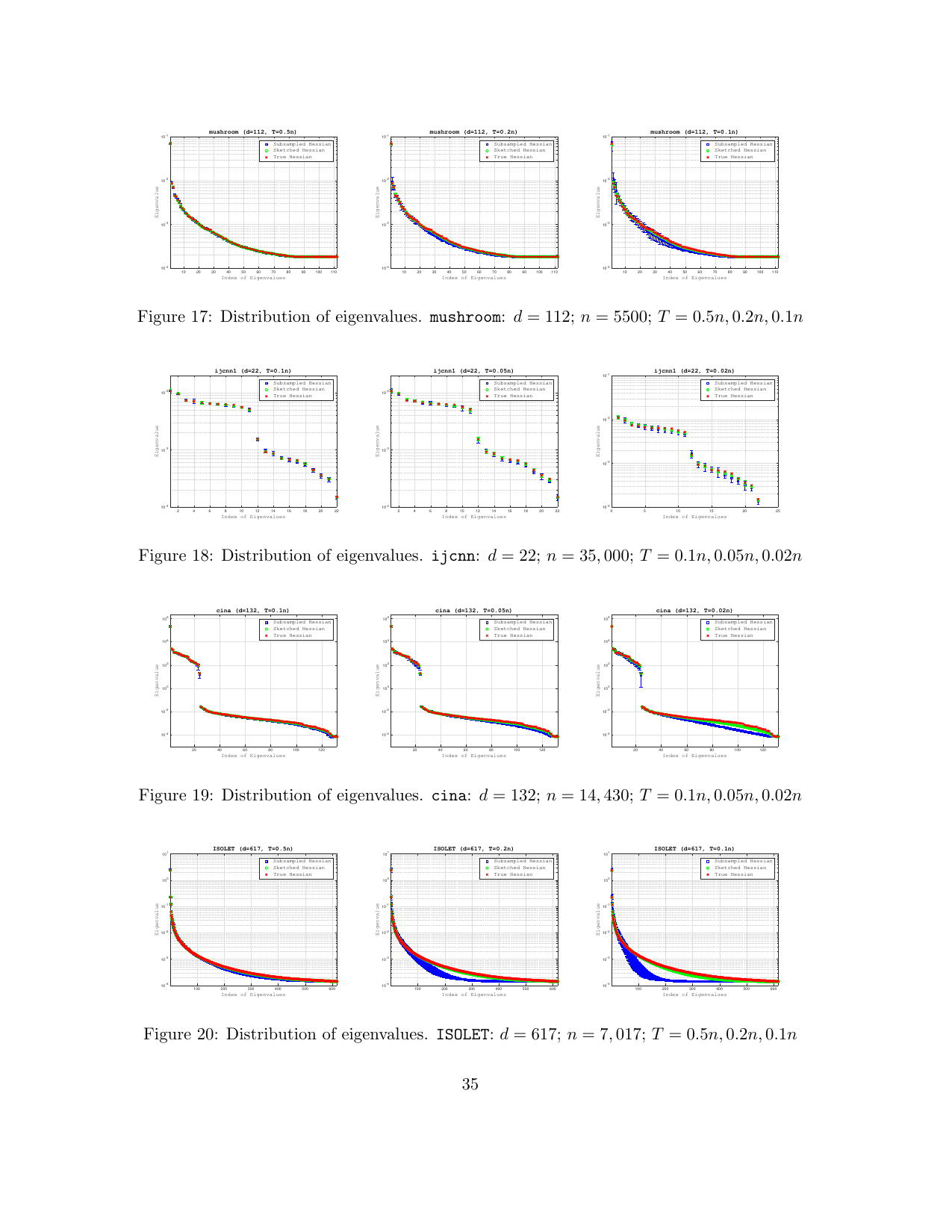}
\par\end{centering}
\caption{Distribution of eigenvalues. \texttt{ISOLET}: $d = 617$; $n = 7,017$; $T = 0.5n, 0.2n, 0.1n$}
\label{fig: ISOLET}
\end{figure}

\begin{figure}[H]
\begin{centering}
\includegraphics[width=0.9\textwidth]{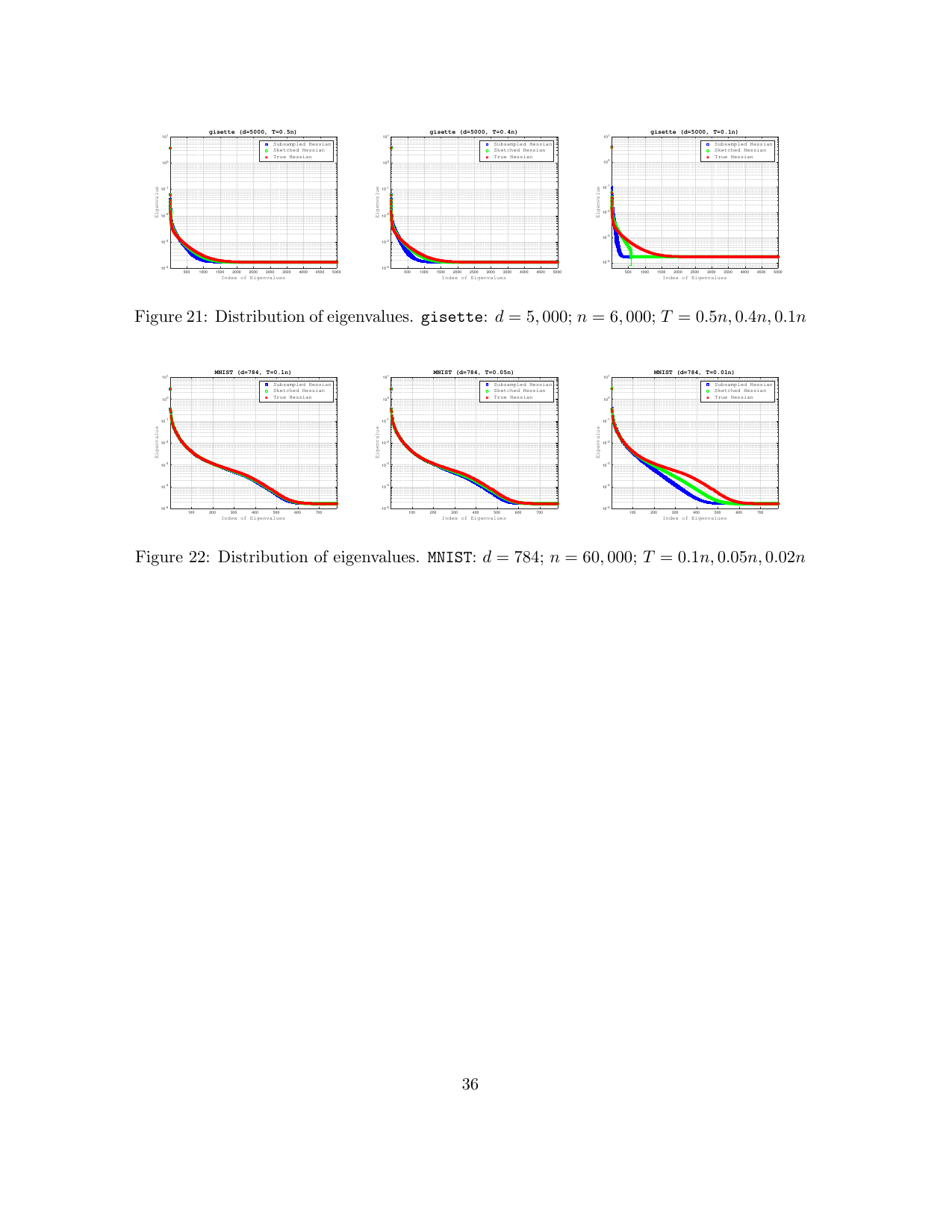}
\par\end{centering}
\caption{Distribution of eigenvalues. \texttt{gisette}: $d = 5,000$; $n = 6,000$; $T = 0.5n, 0.4n, 0.1n$}
\label{fig: gisette}
\end{figure}

\begin{figure}[H]
\begin{centering}
\includegraphics[width=0.9\textwidth]{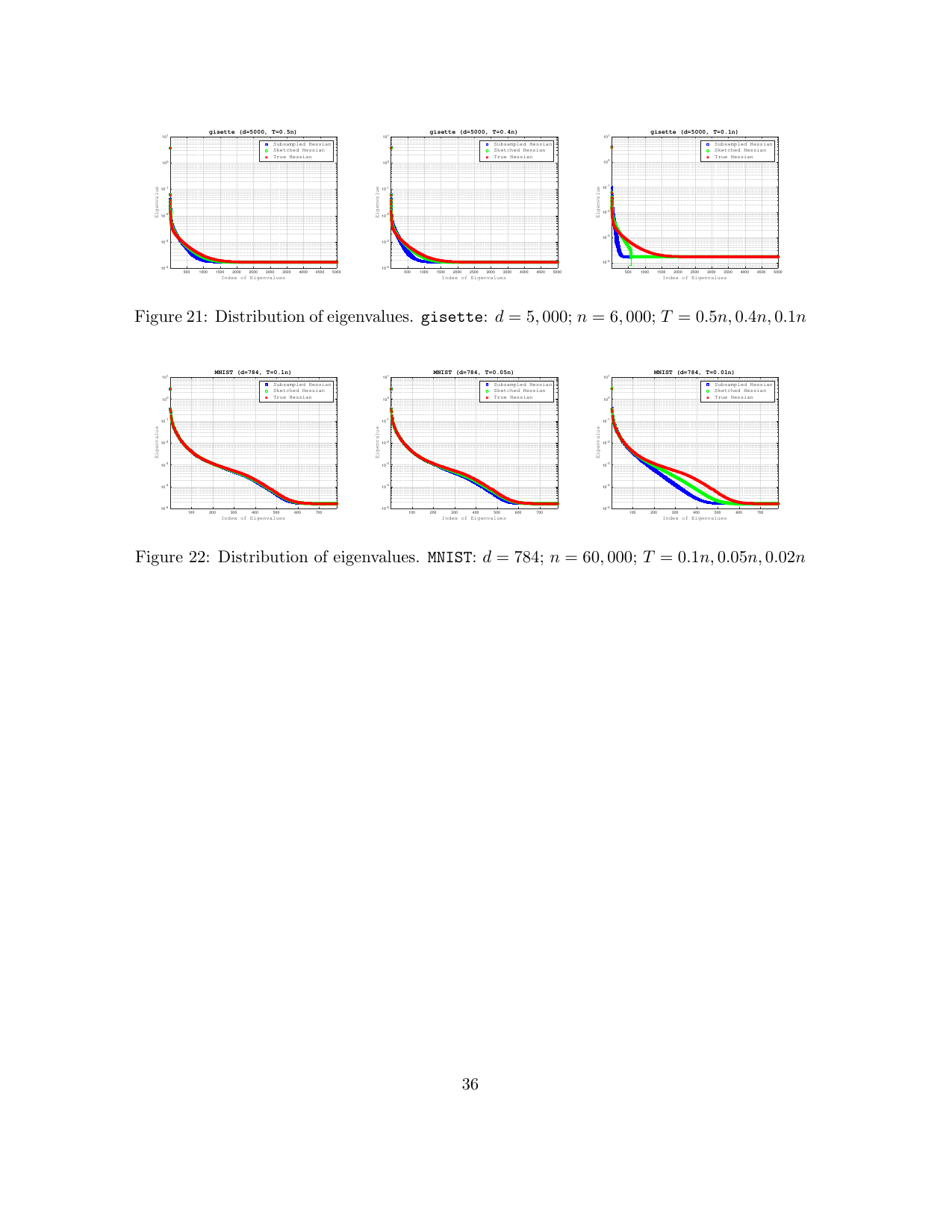}
\par\end{centering}
\caption{Distribution of eigenvalues. \texttt{MNIST}: $d = 784$; $n = 60,000$; $T = 0.1n, 0.05n, 0.02n$}
\label{fig: MNIST}
\end{figure}

\end{document}